\documentclass[12pt]{amsart} 
\usepackage[top=1in, bottom=1in, left=1in, right=1in, footskip=0.5in]{geometry}

\usepackage{times}
\usepackage[table,dvipsnames]{xcolor}

\PassOptionsToPackage{hyphens}{url}\usepackage[colorlinks=true, linkcolor=NavyBlue, citecolor=teal, urlcolor=gray]{hyperref}   
\numberwithin{equation}{section}

\usepackage{tikz-cd}
\usepackage{scrextend}
\usepackage{amssymb,mathrsfs,amsmath}
\usepackage{mathtools}
\usepackage{bbm}
\usepackage{setspace}
\usepackage{enumitem}
\usepackage{hyperref}

\theoremstyle{plain}
\newtheorem{thm}{Theorem}
\newtheorem{prop}{Proposition}[section]
\newtheorem{lem}[prop]{Lemma}

\newtheorem{cor}{Corollary}

\theoremstyle{definition}

\theoremstyle{remark}
\newtheorem*{rem*}{Remark}
\newtheorem*{rems*}{Remarks}

\newtheorem{question}{Question}

\newcommand{\Z}{\mathbb{Z}}

\newcommand{\C}{\mathbb{C}}

\newcommand{\E}{\mathbb{E}}
\renewcommand{\P}{\mathbb{P}}

\newcommand{\CA}{\mathcal{A}}

\newcommand{\CC}{\mathcal{C}}

\newcommand{\CF}{\mathcal{F}}

\let\gcd\relax
\DeclareMathOperator{\gcd}{gcd}

\let\lcm\relax
\DeclareMathOperator{\lcm}{lcm}

\newcommand{\bv}\boldsymbol{}

\newcommand{\eps}\varepsilon

\renewcommand{\leq}{\leqslant}
\renewcommand{\le}{\leqslant}
\renewcommand{\geq}{\geqslant}
\renewcommand{\ge}{\geqslant}

\renewcommand{\mod}[1]{\ \,(\mathrm{mod}\,#1)}

\title{Results for distinct covering systems with a prime minimum modulus $q \ge 11$}
\author{Michael Filaseta}
\author{Alexandros Kalogirou}
\author{Jonah Klein}

\subjclass[2020]{Primary 11B25; Secondary 11A07, 11B05}
\keywords{covering system, distortion method}

\begin{document}

\begin{abstract}
We prove that if the smallest modulus of a covering system is a prime $q \ge 11$, then the sum of the reciprocals of the moduli is at least $1+10^{-28}$. This improves on a previous more general bound of the first two authors in this particular case. We also show that it is impossible to construct a covering system using distinct multiples of $q$ as moduli. Finally, we show as a corollary that it is impossible to construct $10$ distinct covering systems with the property that no modulus is used in more than one of the covering systems. 
\end{abstract}
\maketitle

\section{Introduction}

For $a, d \in \Z$, with $d \ge 1$, the \textit{arithmetic progression} $a \mod d$ is the set
$\{a+kd:k \in \Z\}$. A \textit{covering system} or simply a \textit{covering} is a finite collection of arithmetic progressions with each modulus $d \ge 2$ and with the property that each integer belongs to at least one of the progressions. We say that a covering system is \textit{distinct} if every arithmetic progression in the covering system has a different modulus. 

Covering systems were introduced by Erd\H{o}s \cite{Erdos} in 1950. In the same paper, he asked whether or not for any fixed $M$, it is possible to construct a covering system with distinct moduli, all of which are $>M$. This problem, often referred to as the \textit{minimum modulus problem}, was resolved by Hough \cite{hough} in 2015 in the negative; he showed that any covering system with distinct moduli has a modulus that is smaller than $10^{16}$. 

In 2022, Balister, Bollob\'as, Morris, Sahasrabudhe, and Tiba \cite{Balister} improved on Hough's ideas and reduced this bound to $615999$. The method they used, which they named the \textit{distortion method}, has seen a number of applications in recent years, see for example \cite{BalisterSquareFree, Balister, Cummings, Filaseta, Klein, Li, Li2, Li3, Wang}. For an expository note on the distortion method, see \cite{BalisterExposition}. 

In 1952, Davenport (as reported by Erd\H{o}s \cite{Mirsky}) posed the problem of finding a condition on the minimum modulus $m_0$ of a distinct covering system that would imply that the sum of the reciprocals of the moduli is bounded away from 1. In one of the applications mentioned above, the first two authors \cite{Filaseta} resolved this problem, showing that the sum of the reciprocals of the moduli in a covering system with distinct moduli and minimum modulus at least 5 is bounded away from $1$. This confirmed a conjecture of Erd\H{o}s and Selfridge \cite{Erdos2}. In particular, they proved the following result. 

\begin{thm}[\cite{Filaseta}]\label{FilasetaKalogirou}
For every finite distinct covering system with minimum modulus exceeding 4, the sum of the reciprocals of the moduli is at least 
\[1+\exp(-3.343054 \cdot 10^{21}).\]
\end{thm}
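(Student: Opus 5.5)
This is Theorem~\ref{FilasetaKalogirou} from \cite{Filaseta}. The plan is to prove it by the distortion method of Balister, Bollob\'as, Morris, Sahasrabudhe and Tiba, run \emph{quantitatively}. Suppose $\mathcal{A}=\{a_d \mod d : d\in D\}$ is a distinct covering with $\min D\ge 5$, and set $Q=\lcm(D)$ with prime factorisation $Q=p_1^{\nu_1}\cdots p_k^{\nu_k}$, where $p_1<\cdots<p_k$. Start from the uniform probability measure $\P_0$ on $\Z/Q\Z$. Build measures $\P_1,\dots,\P_k$ step by step: at step $j$, inside each fibre over a residue class modulo $Q_{j-1}=p_1^{\nu_1}\cdots p_{j-1}^{\nu_{j-1}}$, reweight the $p_j$-adic coordinate. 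Points not covered by the progressions $A_j$ (those whose moduli have largest prime factor $p_j$) receive weight proportional to $\P_{j-1}$. Covered points are down-weighted by a factor governed by parameters $\delta_j\in[0,1/2]$ whenever the fibre-proportion $\alpha_j(x)$ of covered points is at most $\delta_j$. Fibres with $\alpha_j(x)>\delta_j$ are left alone. Let
\[
S=\sum_{d\in D}\frac1d .
\]
The goal is to show that if $S<1+\eps$, with $\eps=\exp(-3.343054\cdot 10^{21})$, then $\P_k$ gives positive mass to points not covered by any $A_j$. That contradicts covering.

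\textbf{Step 1 (key inequality).} Let $B_j=\bigcup_{i\le j}A_i$. By the standard BBMST first- and second-moment argument, uncovered mass is bounded below by
\[
\P_k\Bigl(\Z/Q\Z\setminus B_k\Bigr)\ge 1-\sum_{j}\frac{M_j^{(1)}}{\prod_{i}(1-\delta_i)}-\sum_j \frac{M_j^{(2)}}{\delta_j(1-\delta_j)},
\]
where $M_j^{(r)}=\E_{\P_{j-1}}[\alpha_j^r]$. The first moment is $M_j^{(1)}\le \sum_{d\in D_j}\frac{1}{d}\prod_{i<j}(1-\delta_i)^{-1}$. For the second moment, bound $M_j^{(2)}$ by the sum over pairs $d,d'$ with largest prime $p_j$ of the pair-intersection densities, distorted by products of factors $(1-\delta_i)^{-1}$ at the primes $p_i<p_j$ dividing $\lcm(d,d')$.

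\textbf{Step 2 (small primes versus the hypothesis $S<1+\eps$).} The point of the problem is that the first-moment sum alone is about $S$, which is near $1$, so the first term cannot simply be made small. Instead, split off the small primes $p\le P_0$ and treat them exactly. Restrict the moduli whose prime factors are all $\le P_0$, and note the uniform density of points they leave uncovered. Since $S<1+\eps$ and the covering must still cover everything, the density of multiply-covered points is at most $\eps$. Combinatorially, in any fibre structure the sub-system on small primes must already be nearly an \emph{exact} covering on its own residue structure. One would use the fact that a disjoint exact covering with distinct moduli is impossible (Mirsky--Newman/Davenport--Rado). This can be quantified: any distinct system with minimum modulus $\ge5$ and moduli supported on primes $\le P_0$ leaves either a proportion $\ge c(P_0)>0$ uncovered or has overlap $\ge c(P_0)$. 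Taking $\eps<c(P_0)$, a proportion $\ge c(P_0)-\eps$ of $\Z/Q_{P_0}\Z$ remains uncovered after the small-prime stage.

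\textbf{Step 3 (large primes).} With $\P$ at least $c(P_0)/2$ on uncovered fibres, choose $\delta_j$ for the primes $p_j>P_0$ (e.g.\ $\delta_j\approx p_j^{-1/2}$ or $1/(\log p_j)^{2}$). Then the second-moment terms, which sum over pairs of distinct moduli sharing the prime $p_j$, are tiny in total; this uses distinctness and $\sum 1/p^2<\infty$. The distorted first-moment term from large primes is at most $(S-S_{\text{small}})\cdot(1+o(1))$. The conclusion is that uncovered mass stays positive. The explicit constant arises from choosing $P_0$ so that the Step~3 error terms are below $c(P_0)/4$, then making $c(P_0)$ explicit; it is plausibly doubly exponential, hence $\exp(-3.3\cdot10^{21})$.

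\textbf{Main obstacle.} The hardest part is the quantitative Step~2: extracting an explicit $c(P_0)$ from the combination of $S<1+\eps$, minimum modulus $\ge5$, and distinctness. The difficulty is that it must interact correctly with the distorted, rather than uniform, measure on later fibres. I would first try a compactness-free argument. Inside each fibre, the small-modulus subsystem restricted to that fibre is a finite problem, and some explicit computation or inequality in the spirit of the proof of Theorem~\ref{FilasetaKalogirou} handles minimum modulus $5$ rather than larger values.
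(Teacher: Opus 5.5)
This theorem is only quoted in the paper, from \cite{Filaseta}, so there is no proof here to compare against. Judged on its own terms, your proposal has a genuine gap, and it sits exactly where you flag the ``main obstacle''.

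The dichotomy constant $c(P_0)$ of Step~2 is asserted, not proved. It does not follow from Mirsky--Newman/Davenport--Rado, which is qualitative and concerns finite exact coverings. Nor is there a compactness argument to upgrade it: moduli may contain unbounded powers of the primes $\le P_0$, and with minimum modulus $2$ the truncations of $\{2^k \mod{2^{k+1}}\}_{k\ge 0}$ are disjoint, have distinct moduli, and leave only density $2^{-n}$ uncovered. So any $c(P_0)$ must come from a quantitative use of $\min D\ge 5$. That is a packing form of the Erd\H{o}s--Selfridge problem itself.

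Step~3 cannot close the argument either. The large-prime second moments are not ``tiny by distinctness and $\sum 1/p^2<\infty$''. The available bound (Lemma~\ref{mainmomentbounds}) carries the factor $\prod_{i<j}\bigl(1+\frac{3p_i-1}{(1-\delta_i)(p_i-1)^2}\bigr)$. Over $p_i\le P_0$ alone this is $\asymp(\log P_0)^3$, and already at least $33.75$ from $p=2,3,5$. So, whatever the $\delta_j$, the resulting bound for $\sum_{p_j>P_0}M_j^{(2)}/(4\delta_j(1-\delta_j))$ is at best of order $(\log P_0)^2/P_0$. You would need $c(P_0)-\eps$ to beat this, which requires a lower bound for $c(P_0)$ decaying at most polynomially in $P_0$. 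Nothing supplies one. Since $c(P_0)$ can only decrease as $P_0$ grows, the instruction ``choose $P_0$ so the Step~3 error is below $c(P_0)/4$'' is circular.

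There are two further defects.
\begin{itemize}
\item Your Step~1 inequality is vacuous. It charges every stage both moments, while for a covering Lemma~\ref{maindistortion} forces $\sum_j M_j^{(1)}\ge 1$, so its right-hand side is $\le 0$. The correct statement charges each stage only $\min\bigl(M_j^{(1)},M_j^{(2)}/(4\delta_j(1-\delta_j))\bigr)$. For the same reason the charge $(S-S_{\mathrm{small}})(1+o(1))$ in Step~3 must go, since it is at least the density the large moduli have to cover.
\item The measure is misdescribed. When $\alpha_j<\delta_j$ the covered points are deleted, and fibres with $\alpha_j\ge\delta_j$ are reweighted, not left alone.
\end{itemize}

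The missing idea is to make the hypothesis $S<1+\eps$ act pairwise, not only through the aggregate statement that the overlap has density at most $\eps$. If two progressions of $\CA$ meet, they meet in a progression of density $1/\lcm(d,d')$ that is covered twice, so $\lcm(d,d')\ge 1/\eps$. In particular, coprime moduli satisfy $dd'\ge 1/\eps$; this is the observation of \cite[p.~4950]{Filaseta} recorded here as Lemma~\ref{bigcoprime}. Likewise, all moduli below $\eps^{-1/2}$ form a pairwise disjoint family.

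Structural input of this kind is what makes second-moment sums small when the minimum modulus is small. Compare Section~3, where the cross terms $T_{i,2}$ are controlled because they involve a modulus not divisible by $q$, which therefore exceeds $M$. Your Step~3 never uses $\eps$ at all, so it falls back on the unconditional BBMST estimates. Those are effective only when every modulus is large, which is exactly what fails at minimum modulus $5$.

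Finally, the constant $\exp(-3.343054\cdot 10^{21})$ is guessed (``plausibly doubly exponential''), not derived. Even granting the scheme, the stated bound would not be established.
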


One of our goals in this paper is to show an alternative approach to obtaining a form of this result in the case that the minimum modulus is a prime $\ge 11$. In particular, we prove the following. 

\begin{thm}\label{reciprocalsum}
If $C$ is a covering system with distinct moduli and minimum modulus a prime $q \ge 11$, then the sum of the reciprocals of the moduli in $C$ is at least $1+10^{-28}$. 
\end{thm}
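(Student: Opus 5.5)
The plan is to use the distortion method of Balister, Bollob\'as, Morris, Sahasrabudhe and Tiba, exploiting the fact that the minimum modulus is a prime $q \ge 11$, so every modulus $d$ in $C$ satisfies $d \ge q$. Suppose $C$ has distinct moduli $d_1<\dots<d_k$ with $d_1 = q$ and $\sum 1/d_i < 1+\delta$, where $\delta = 10^{-28}$; we aim for a contradiction. Let $Q=\lcm(d_i)$ with prime factorization $p_1<\dots<p_n$ (listed with multiplicity as in the distortion framework), so that $\Z/Q\Z$ is viewed as a product space built up one prime factor at a time. We define a sequence of probability measures $\P_0,\P_1,\dots,\P_n$ on $\Z/Q\Z$, with $\P_0$ uniform. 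Passing from $\P_{i-1}$ to $\P_i$ distorts the distribution on the $i$-th coordinate: the mass is shifted away from residues covered by progressions whose modulus has largest prime factor $p_i$. The distortion is controlled by a threshold $\eta_i$, chosen depending on $p_i$.

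The key quantities are the measures of the sets $B_i$ of integers covered by progressions of stage $i$. The standard lemma says the following. If the distortion parameters are chosen suitably, then $\E_{i}[\mathbf 1_{B_i}]$ is bounded by a first-moment term plus second-moment terms, namely
\[
\P_{i}(B_i) \le \frac{1}{1-\eta_i}\,\Bigl(\text{first moment}\Bigr)^{\!?}\ \text{or}\ \frac{M_i^{(2)}}{4\eta_i(1-\eta_i)},
\]
where $M^{(1)}_i, M^{(2)}_i$ are sums over moduli of $1/d$ weighted by products $\prod (1+\text{multiplier})$ over prime factors of $d$. Covering means $\sum_i \P_{i}(B_i) \ge 1$. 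I would apply this with $\eta_i = 0$ for the small primes, so that nothing is distorted for moduli having all prime factors at most some cutoff $P_0$. In that range I would use exact sums: $\P_0$ of the union of the progressions there is at most the sum of the reciprocals. The whole excess $\delta$ must then come from the stages with large primes, which carry the second-moment cost.

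The main new step is to use that every modulus is $\ge q$, and that $q$ is a prime. The only modulus below $q$-scale constraints is $q$ itself, and all other moduli exceed $q$. A cleaner route is the following. Since $q\mid$ possibly several moduli, split $C$ according to whether $q \mid d$. The progression $a \mod q$ covers a $1/q$ fraction, and the rest must cover $q-1$ residue classes mod $q$. Fixing a class $b \not\equiv a \mod q$ and restricting to $x\equiv b$, each progression $r\mod d$ either misses this class or induces a progression with modulus $d/\gcd(d,q)$. We thus obtain $q-1$ coverings with moduli among $\{d\}\cup\{d/q\}$ and multiplicity pattern controlled. Averaging, we need
\[
\sum_{q\nmid d}\frac1d+\sum_{q\mid d,\,d>q}\frac{1}{d}\cdot\frac{q}{q-1}\ \text{(appropriately)} \ \ge 1-\tfrac1q+\text{excess}.
\]
I would then apply the distortion inequalities to each restricted covering, where moduli now have weighted multiplicities. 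The quantitative bounds of the distortion method (as in \cite{Filaseta}) give a lower bound $1+\delta'$ for the weighted reciprocal sum of any such system with minimum modulus $\ge 11$, roughly, where ``minimum modulus'' is the bound forced on moduli coprime to $q$. Summing over the $q-1$ classes then yields the $10^{-28}$ excess.

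The main obstacle is the numerical optimization of the distortion parameters $\eta_i$ and the primes cutoff. The goal is an explicit excess of size about $10^{-28}$ rather than $\exp(-10^{21})$. The reason the huge loss in Theorem~\ref{FilasetaKalogirou} can be avoided is that with minimum modulus $\ge 11$ (rather than $5$), the products $\prod_{p}(1+c/p)$ appearing in the second moment converge with acceptable constants. This makes the second-moment sum less than $1$ already at the small-prime stage, without iterating a delicate bootstrap. I would first try the multiplier choice $\eta_i \approx c/p_i$ with $c$ tuned, and verify the final inequality by computer over primes up to a moderate bound, with a tail estimate via Mertens-type bounds.
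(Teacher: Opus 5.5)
\textbf{The proposal has a genuine gap: the key reduction is missing, and the per-class step you rely on fails.}

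\textbf{The missing reduction.} The paper proves this theorem by an elementary argument. Suppose $a \bmod m$ belongs to $C$ with $q\nmid m$. Since $q$ is prime, $\gcd(m,q)=1$, so this progression meets the progression of modulus $q$ in a set of density $1/(qm)$, and hence $\sum_{d}1/d\ge 1+1/(qm)$. So if the sum were below $1+10^{-28}$, every modulus not divisible by $q$ would exceed $M=10^{28}/q$ (Lemma~\ref{bigcoprime}). Theorem~\ref{main} then says such a system cannot cover.

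This structural fact is what makes the distortion method close. The $p_{18}$-smooth first moment drops to at most $\frac{1}{q-1}\prod_{p\le 61,\,p\ne q}\frac{p}{p-1}$ (about $0.69$ for $q=11$), plus a Rankin-trick term below $1.3\cdot 10^{-11}$. In the second moment, pairs with $q\mid\gcd(m_1,m_2)$ carry an extra factor of roughly $1/q$. All remaining pairs have $\lcm(m_1,m_2)>M$ and are negligible by Rankin's trick.

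You allude to ``the bound forced on moduli coprime to $q$'' but never derive it, and nothing else in your plan produces the constant $10^{-28}$. Your idea that the excess ``must come from the large-prime stages'' gives no usable inequality. Used directly, the hypothesis $\sum 1/d<1+10^{-28}$ only bounds the small-prime first moment by $1+10^{-28}$, which leaves no room below $1$.

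\textbf{Where your route fails.}
\begin{enumerate}
\item Restricting to a class $b\not\equiv a \pmod q$ turns progressions of modulus $2q, 3q,\dots$ into progressions of modulus $2,3,\dots$. So the restricted systems do not have minimum modulus $\ge 11$, and there is no per-class excess $\delta'$ to sum. What they do have is pairwise disjoint sets of moduli $d/q$ (the structure behind Corollary~\ref{cor}). That structure must be exploited jointly, as the paper does by running the distortion argument on all of $\Z/Q\Z$.
\item Even granting a per-class bound, Theorem~\ref{FilasetaKalogirou} would only give an excess of order $\exp(-3.3\cdot 10^{21})$, not $10^{-28}$.
\item Your explanation of why a direct argument now works (``with minimum modulus $\ge 11$ the products converge with acceptable constants'') is not the mechanism. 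The second-moment bounds of Lemma~\ref{mainmomentbounds} do not depend on the minimum modulus at all. Moreover, distinct coverings with minimum modulus $11$ (indeed up to $42$) exist. So no distortion computation using only the size of the minimum modulus can get below $1$. Primality of $q$ is used through the overlap argument above.
\item The choice $\eta_i\approx c/p_i$ would make the second-moment series diverge, since its terms behave like $(\log p_i)^3/p_i$. The paper uses $\delta_i$ of constant size, between $0.263$ and $1/2$.
\end{enumerate}
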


For the proof of Theorem~\ref{reciprocalsum}, we will make direct use of the distortion method. 
Although the first two authors in \cite{Filaseta} also made use of the distortion method, they further exploited some different ideas to achieve their result for general minimum modulus exceeding $4$. 
Our proof here is centered around what is needed to establish Theorem~\ref{reciprocalsum} for $q = 11$ by a direct use of the distortion method.  
The lower bound on the sum of the reciprocals of the moduli  in Theorem~\ref{reciprocalsum} can be improved for primes $q > 11$, but we only focus on showing that a uniform bound of $1+10^{-28}$ can be obtained for all primes $q \ge 11$.
Along the way to proving Theorem \ref{reciprocalsum}, we also prove the following result. 

\begin{thm}\label{no11}
Let $q \ge 11$ be a prime. Then
there is no covering system using only distinct multiples of $q$ as moduli.  
\end{thm}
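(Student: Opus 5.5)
The plan is to argue by contradiction with the distortion method of Balister, Bollob\'as, Morris, Sahasrabudhe and Tiba. Suppose $C=\{a_i \bmod qd_i\}$ is a covering with distinct moduli $qd_i$ (distinct $d_i\ge1$). First we reduce to a covering by progressions modulo the $d_i$. Fix a residue class $r \bmod q$ and write integers as $n=r+qm$. Then the class $a_i \bmod qd_i$ meets $\{r+qm\}$ only if $a_i\equiv r \bmod q$, and in that case it corresponds to a single class $m\equiv b_i \bmod d_i$. Each class $a_i \bmod qd_i$ lies over exactly one $r$. So $C$ splits into $q$ families $C_0,\dots,C_{q-1}$, where $C_r$ consists of progressions modulo $d_i$ that cover $\Z$, and the $d_i$ are distinct across all families. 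If some $C_r$ uses $d_i=1$, that family is trivially covered. Since the modulus $q\cdot 1$ can occur only once, however, at most one family has this shortcut. Hence we obtain at least $q-1\ge 10$ coverings of $\Z$, pairwise sharing no modulus and each having all moduli $\ge 2$. This is exactly the configuration behind the corollary in the abstract; indeed the corollary presumably follows from the theorem by the reverse construction.

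Next, let $Q=\lcm(d_i)$ and run the distortion method on $\Z/Q\Z$, processing primes $p_1<p_2<\cdots<p_k$ dividing $Q$ one at a time. We build probability measures $\P_j$ by distorting at stage $j$ those $p_j$-adic coordinates which are heavily hit by progressions whose moduli are $p_j$-smooth. The aim is to show that the expected $\P_k$-mass of the union of all progressions in a \emph{single} family is bounded by
\[
\sum_{j}\ \sum_{d:\,P^+(d)=p_j} \frac{\text{(distortion factor)}}{d}\cdot\mathbf{1}_{d\in D_r},
\]
where $D_r$ is the set of $d_i$ used by $C_r$. The families are disjoint in moduli, so summing over $r$ gives total uncovered measure at least $q-1-\Sigma$, where $\Sigma$ is a distortion-weighted sum over \emph{all} positive integers $d\ge2$ with each $d$ counted once. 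Equivalently, one can work with the union $\bigcup_r\{r\}\times C_r$ directly on $\Z/q\Z\times\Z/Q\Z$ with a measure that is uniform in the first coordinate; then the first-moment bound is the same calculation, divided by $q$. The contradiction needed is
\[
\Sigma<q-1,
\]
and since $q\ge 11$ it suffices to establish $\Sigma<10$ uniformly in $Q$. For the bound on $\Sigma$ we use the standard BBMST framework: the first-moment term $\sum_{d} \P(\text{class}) $ together with the second-moment error terms. This leads to a product over primes of factors like $(1+\text{const}\cdot p^{-1-\delta_p}\dots)$, controlled via the choice of distortion parameters $\delta_j$, for example $\delta_j=1/2$ for small primes and tuned thereafter. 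The proof then finishes with a numerical verification, handling small primes by explicit computation and large primes by Mertens-type tail estimates.

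The main obstacle is getting the constant below $10$ (equivalently $q-1$ for $q=11$). A naive bound on $\sum_{d\ge2}1/d$-type quantities diverges, and the whole point of distortion is to keep the distorted sum finite. BBMST's argument achieves only a large constant for minimum modulus around $6\cdot10^5$, whereas here the moduli $d_i$ may be as small as $2$. What makes it feasible is that we are not bounding the covering of one family with large minimum modulus; we are only requiring the total distorted density over \emph{all} $d\ge 2$, shared among ten families, to be less than $10$. For this I would first try the choice of BBMST's parameters for the second-moment inequality $\E_{\P_{j-1}}[\alpha_j^2]$, optimizing $\delta_j$ per small prime numerically ($p\le 10^4$, say). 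If the estimate lands just above $10$, I would refine it by treating the primes $2,3$ exactly, i.e.\ not distorting at them but computing their contribution directly. Finally, I would note that the same computation with slack yields the quantitative $10^{-28}$ of Theorem~\ref{reciprocalsum}.
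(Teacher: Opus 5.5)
Your reduction to $q-1$ coverings with pairwise disjoint sets of moduli is correct; it is the converse of the construction in the proof of Corollary~\ref{cor}. Your bookkeeping can also be made rigorous. Run a separate distortion process on each family $C_r$ with the same $\delta_j$, and use $\sum_r\min(a_r,b_r)\le\min\bigl(\sum_r a_r,\sum_r b_r\bigr)$. Then note that $\sum_r M^{(2)}_{j,r}$ still satisfies the bound of Lemma~\ref{mainmomentbounds}, for two reasons. Pairs of moduli from a common family are among all pairs. And since the moduli are distinct across families, each $p_{j-1}$-smooth part $m$ carries total weight at most $\sum_{k\ge1}p_j^{-k}=1/(p_j-1)$. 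You should state this explicitly. It is what makes ``each $d$ counted once'' legitimate for the quadratic terms; a first-moment sum over all $d\ge2$ alone diverges.

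With this in place, your criterion $\Sigma<q-1$ is essentially the paper's inequality for $q=11$, rescaled by $q$, with the term of the modulus $q$ removed. Your division by $q$ is exactly the saving $\frac{1}{q-1}\prod_{p\ne q}\frac{p}{p-1}=\frac1q\prod_{p\le 61}\frac{p}{p-1}$ in Lemma~\ref{moment1d1}, and $f(11)=1/11$ after Lemma~\ref{T1bound}. So this is the paper's strategy repackaged, with the mild bonus that $\Sigma$ does not depend on $q$.

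The gap is that the entire content of the proof is the numerical inequality $\Sigma<10$, which you do not establish. Moreover, the guidance you give for it points the wrong way. The factor attached to a prime $p$ is $1+\frac{3p-1}{(1-\delta_p)(p-1)^2}$, not something like $1+c\,p^{-1-\delta_p}$. Distorting at $p$ inflates $\nu(p^k)=1/(1-\delta_p)$ in every later first- and second-moment bound. At small primes it buys nothing, because the first moment is the smaller term there.

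Concretely, take $\delta_j=1/2$ at the small primes, so $4\delta_j(1-\delta_j)=1$. At each stage $p_j\le 23$ the minimum is then the first-moment bound $\frac{1}{p_j-1}\prod_{i<j}\frac{p_i+1}{p_i-1}$, and these bounds already sum to about $10.95$. Leaving only $2$ and $3$ undistorted does not rescue this. The first moments through $p=61$ then cost about $9.3$, and the inflated factors at $5\le p\le 61$ multiply the remaining second-moment sum by roughly $8$.

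What works is the paper's choice:
\begin{itemize}
\item take $\delta_j=0$ for all $p_j\le 61$, costing $\prod_{p\le 61}\frac{p}{p-1}-1\approx 6.60$;
\item then let $\delta_j$ rise from $0.263$ to $0.5$, which contributes about $2.52$ for $19\le i\le 10^5$;
\item the tail $i>10^5$ contributes less than $0.0053$, using explicit products up to $p_{2.5\cdot 10^8}$ and Rosser--Schoenfeld beyond.
\end{itemize}
This gives $\Sigma\approx 9.12$. With a margin under ten percent, the argument is not a proof until a concrete parameter choice of this kind is fixed and these sums, including a rigorous tail bound, are actually verified.
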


Instead of directly proving Theorem \ref{reciprocalsum} or Theorem \ref{no11}, we will instead prove the following slightly more general theorem, of which Theorem \ref{reciprocalsum} and Theorem \ref{no11} are both corollaries. 

\begin{thm}\label{main}
Let $q \ge 11$ be a prime. 
Suppose $\CA$ is a finite collection of arithmetic progressions with distinct moduli, minimum modulus $q$, and such that every modulus $m$ that is not divisible by $q$ satisfies $m>M:=10^{28}/q$. Then $\CA$ is not a covering system. 
\end{thm}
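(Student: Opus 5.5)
We will run the distortion method of Balister, Bollob\'as, Morris, Sahasrabudhe and Tiba directly on $\CA$, ordering the primes with $q$ placed first and the remaining primes $p_1<p_2<\cdots$ in increasing order. Write $\CA=\CA_q\cup\CA'$, where $\CA_q$ consists of the progressions whose modulus is divisible by $q$ and $\CA'$ of the rest, so every modulus in $\CA'$ exceeds $M=10^{28}/q$. We build a sequence of probability measures $\P_0,\P_1,\dots$ on $\Z/L\Z$, with $L$ the lcm of the moduli. $\P_0$ is uniform. At the first stage, corresponding to $q$, we distort toward residues mod $q$ and along the $q$-part of the moduli, so that the uncovered mass is maximized. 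At later stages, for each prime $p_i$ in turn, we use the standard distortion with a parameter $\delta_i$ that is $0$ for small $p_i$ and chosen by the usual first- and second-moment inequality for larger $p_i$. The goal is to show that the final measure gives positive mass to the uncovered set.

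\textbf{Step 1: progressions with modulus divisible by $q$.} The progressions of $\CA_q$ have moduli $qn$ with the $n$ distinct. Fixing a residue $r\bmod q$, the progressions with moduli $qn$ that meet the class $r$ restrict to progressions with moduli $n$ on that class, and these $n$ are distinct over all $r$ combined. We want a bound on the density of $\bigcup\CA_q$ under a suitably chosen measure. The point is that the sum $\sum 1/(qn)$ over distinct $n$ is unbounded, so the bound cannot be uniform; we must use the distortion. Write $N(m)$ for the $q$-free part of a modulus $m$. The strategy is to show that, in the $p_i$-stages, the expected mass removed is controlled by $\sum_{n}\frac{1}{q n}\prod_{p\mid n}(\text{distortion factor})$. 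Taking the $\delta_i$ as in BBMST, this is bounded by an Euler product roughly $\frac1q\prod_{p}\bigl(1+\frac{c}{p-1}\bigr)$ over small primes together with a convergent tail. The second-moment term is controlled by $\sum 1/(qn)^2$-type products. For $q\ge 11$ the cumulative first-moment loss from $\CA_q$ should be bounded by something like $1-\eta$. Since $1/q\le 1/11$, the small-prime Euler product needs a factor of less than $11$. Checking this numerically is what forces $q\ge11$.

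\textbf{Step 2: progressions in $\CA'$.} Every modulus in $\CA'$ exceeds $M$. The total mass these progressions can remove is bounded, via the same moment inequalities, by a tail quantity that decays in $M$, roughly $\ll (\log M)^{C}/M^{c}$ or a smaller product term. We choose the cutoff so that, with $M=10^{28}/q$, this tail is below the leftover margin $\eta$ from Step 1. Combining the two steps, the final measure of the uncovered set is at least $\eta-(\text{tail})>0$, so $\CA$ is not a covering.

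\textbf{Main obstacle.} The hard part is the explicit numerical verification in Step 1 for $q=11$, the worst case. This requires choosing the $\delta_i$ for small primes (likely $\delta_i=0$ up to some bound, then $\delta_i\approx$ a constant times $1/p_i$ or similar), and then rigorously bounding the BBMST moment sums so that the margin is positive and large enough to absorb the $M$-tail. First we would optimize the choices for $q=11$ by computer. Then we would show that the resulting inequalities are monotone in $q$, since $1/q$ only decreases and $q$ is excluded from the later product, so that they persist for all primes $q\ge 11$.
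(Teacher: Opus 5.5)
Your outline has the right overall shape: the distortion method with $\delta_i=0$ on small primes, moduli split by divisibility by $q$, Rankin's trick for moduli exceeding $M$, and numerical tuning at $q=11$. But the steps that carry the proof are missing or misdescribed.

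On the small primes, the first moment of the multiples of $q$ is at most $\frac1q\prod_{p\le p_{18}}\frac{p}{p-1}\approx 0.69$ at $q=11$. On the remaining primes, the saving from $q$ must come from the second moment $\E_{i-1}[\alpha_i^2]$, which is a sum over pairs of progressions. Two progressions with moduli $qn_1,qn_2$ interact through $1/\lcm(qn_1,qn_2)$, which contains $q$ only once. So the gain is a factor of about $1/q$, not the $1/q^2$ that ``$\sum 1/(qn)^2$-type products'' suggests.

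The paper obtains this gain (Lemma~\ref{T1bound}) by restricting the BBMST sum $\sum_{m_1,m_2\mid Q_{i-1}}\nu(\lcm(m_1,m_2))/\lcm(m_1,m_2)$ to pairs with $q\mid\gcd(m_1,m_2)$. The number of such pairs with a given lcm is multiplicative with value $2t-1$ at $q^t$. Hence the local factor $1+\frac{3q-1}{(1-\delta)(q-1)^2}$ becomes $\frac{q+1}{(1-\delta)(q-1)^2}$, exactly $1/q$ times as large when $\delta_{\pi(q)}=0$.

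Even with this gain, the second-moment stages cost about $0.23$ at $q=11$. So the real test is not whether the small-prime Euler product ($\approx 7.6$) is below $11$. It is whether, roughly, the whole unrestricted BBMST sum ($\approx 7.6+2.5$) is.

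Moreover, $M_i^{(2)}$ is not additive over $\CA_q$ and $\CA'$. The mixed pairs must be controlled too, and there is no separate ``mass removed by $\CA'$'' that can be subtracted from the final, far from uniform, measure. In the paper, all pairs involving a modulus not divisible by $q$, mixed ones included, are bounded together by Rankin's trick with $\epsilon$ depending on $i$. At $q=11$ this costs about $0.065$, against a final margin of about $0.01$.

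Your suggested $\delta_i\approx c/p_i$ would also break the method. The standard bounds for $M_i^{(1)}$ and for $M_i^{(2)}/(4\delta_i(1-\delta_i))$ are then of order $\log p_i/p_i$ and $(\log p_i)^3/p_i$, neither summable over primes. The paper instead takes constant $\delta_i$ on ranges of $i$, rising from $0.263$ to $1/2$.

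The extension to all $q\ge 11$ by monotonicity is false, because $M=10^{28}/q$ decreases as $q$ grows. The bound for the pairs involving $\CA'$ scales like $(q/11)^{\epsilon}$ and reaches about $0.71$ at $q=615997$. For $q>10^{28}$ the hypothesis on $\CA'$ is vacuous, so no margin left over from $q=11$ can absorb it. The paper needs two further inputs here:
\begin{itemize}
\item the BBMST minimum-modulus theorem, which disposes of $q>615999$ outright;
\item for $61<q\le 615997$, the fact that no multiple of $q$ is $p_{18}$-smooth, so the first-moment contribution of $\CA_q$ vanishes and frees the budget used by the larger $\CA'$ term.
\end{itemize}

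Finally, putting $q$ first in the ordering is harmless but gains nothing, since with $\delta=0$ the measure does not change at that stage. A first-stage distortion ``along the $q$-part of the moduli'' is not available. A progression with modulus $qn$ is revealed only at the stage of the last prime of $qn$ in your ordering, and an ad hoc distortion would not be covered by Lemma~\ref{maindistortion}.
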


It is clear that Theorem \ref{no11} is an immediate consequence of Theorem \ref{main}. The following lemma suffices to see that Theorem \ref{reciprocalsum} also follows from Theorem \ref{main}. This lemma is similar to an observation made in \cite[p.~4950]{Filaseta}.

\begin{lem}\label{bigcoprime}
Suppose $C$ is a covering system with distinct moduli and minimum modulus a prime $q \ge 11$. Suppose further that the sum of the reciprocals of the moduli in $C$ is smaller than $1+10^{-28}$. Then any modulus $m$ of an arithmetic progression in $C$ that is not divisible by $q$ satisfies $m>10^{28}/q$. 
\end{lem}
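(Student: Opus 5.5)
We argue by contradiction, using a short counting argument. Suppose $C$ is a covering with distinct moduli, minimum modulus the prime $q \ge 11$, and reciprocal sum $S < 1+10^{-28}$. Suppose also that some modulus $m$ in $C$ has $q \nmid m$ and $m \le 10^{28}/q$. The key observation is that the progression $a \mod m$ in $C$ then covers too much of the residue classes modulo $q$ that the other progressions are forced to cover.

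First, since $\gcd(m,q)=1$, the Chinese Remainder Theorem shows that $a \mod m$ meets every residue class $r \mod q$ in a progression of density $1/(mq)$. Now fix a residue class $r \mod q$. Since $C$ covers all integers, it covers the class $r \mod q$, and we count densities inside this class, normalized so that the class has total density $1/q$.

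Each progression $b \mod d$ in $C$ meets the class $r \mod q$ in density at most $1/\lcm(d,q)$. This is at most $1/(dq)$ when $q \nmid d$, and at most $1/d$ when $q \mid d$. Summing over $r$, each $d$ with $q \mid d$ contributes total density $1/d$ across all classes, and each $d$ with $q\nmid d$ contributes total density $1/d$ as well. Hence this naive sum alone gives nothing.

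We therefore use a better bound that exploits overlap inside each class. Choose $r$ so that the progressions with $q \mid d$ that lie in class $r$ have total density at most the average over classes. In class $r$, the portion covered by progressions with $q\nmid d$ has density at most $\sum_{q\nmid d} 1/(dq)$. The portion covered by the progressions with $q \mid d$ then has density at most $\frac1q\bigl(S-\sum_{q\nmid d}1/d\bigr)$ on average. Covering the class requires
\[
\frac1q \le \frac1q\sum_{q\nmid d}\frac1d+\frac1q\Bigl(S-\sum_{q\nmid d}\frac1d\Bigr)=\frac{S}{q},
\]
so this is again only the trivial bound $S\ge1$. To gain, we use that the progressions with $q \nmid d$ overlap with each other. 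In particular, $a \mod m$ and the minimum-modulus progression $c \mod q$ intersect in density $1/(mq)$. This overlap is counted twice in the union bound, so the true covered density is at most $S-1/(mq)$. Covering then forces
\[
1 \le S-\frac{1}{mq}, \qquad\text{so}\qquad S\ge 1+\frac{1}{mq}.
\]
Consequently $mq \le 10^{28}$ would give $S\ge 1+10^{-28}$, contradicting the hypothesis on $S$. This gives $m>10^{28}/q$.

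The only real step is the inclusion–exclusion inequality: the density of a union is at most $S$ minus the density of any single pairwise intersection. This is immediate, since $\mathbf 1_{A\cup B\cup\cdots}\le \sum\mathbf 1 - \mathbf 1_{A\cap B}$ pointwise. I expect no obstacle. The only care needed is to confirm that $q$ itself is a modulus in $C$, which holds because $q$ is the minimum modulus, and that $m \neq q$, which holds because $q\nmid m$.
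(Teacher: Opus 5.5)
Your final argument is correct and is the paper's proof: by the Chinese Remainder Theorem the progressions modulo $m$ and modulo $q$ overlap in density $1/(mq)$, so covering forces $S-1/(mq)\ge 1$, which contradicts $S<1+10^{-28}$ whenever $mq\le 10^{28}$. The two per-class averaging attempts that precede it are unnecessary and could be deleted. Also, the phrase ``progressions with $q\nmid d$ overlap with each other'' is misworded, since the progression $c \bmod q$ has $q\mid d$; this does not affect the argument you actually use.
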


\begin{proof}
Suppose $m$ is a modulus of a congruence in $C$ with $m$ not divisible by $q$. By the Chinese Remainder Theorem, the arithmetic progression with modulus $m$ intersects with the arithmetic progression with modulus $q$ on a set of density $1/(qm)$. Let $D$ denote the set of moduli appearing in $C$. Because $C$ is a covering system, we deduce that
\[\sum_{d \in D} \frac{1}{d}-\frac{1}{qm} \ge 1,\]
which implies that
\[
\sum_{d \in D} \frac{1}{d} \ge 1+\frac{1}{qm}.
\]
Since $\sum_{d \in D} 1/d < 1 + 1/10^{28}$ by assumption, we deduce that
\[
1+\frac{1}{qm}<1+\frac{1}{10^{28}},
\]
which implies that $m>10^{28}/q$. 
\end{proof}

Another question to which our work here manages to give a partial answer is the following. What is the largest family of distinct covering systems one can construct such that no two of the covering systems have a modulus in common? Recall that we are taking each modulus to be at least $2$ in a covering system, so in particular $x \equiv 0 \pmod{1}$ is not a covering. Let $N$ denote the size of the largest such family. On one hand, a construction of Harrington \cite{Harrington} implies that $N \geq 3$. On the other hand, the bound of $615999$ on the minimum modulus by Balister et al. \cite{Balister} implies the \textit{trivial} upper bound $N \leq 615998$. 
Indeed, if one is constructing a family of distinct covering systems, then each such covering system has a minimum modulus. Since all of these must be distinct, and they must be at least $2$ and at most $615999$, there can be at most $615998$ minimum moduli, and so there are at most $615998$ covering systems in such a family. 

As a corollary of Theorem \ref{no11}, we get the following improvement on an upper bound for $N$.  
\begin{cor}\label{cor}
For any family of 10 covering systems with distinct moduli, there must be at least one integer that is used as a modulus in more than one of the covering systems. In other words,  $N<10$. 
\end{cor}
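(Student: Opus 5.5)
We argue by contradiction, using only Theorem~\ref{no11} and the known bound on the minimum modulus. Suppose $C_1,\dots,C_{10}$ are covering systems, each with distinct moduli, and no integer is a modulus in two of them. Let $m_i$ be the minimum modulus of $C_i$. Since the moduli of different coverings are disjoint, the $m_i$ are ten distinct integers, each at least $2$.

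The key step is to show that no $m_i$ can be a prime $q\ge 11$. Suppose $m_i = q$ is such a prime. Every modulus of $C_i$ is then at least $q$, so in particular no modulus of $C_i$ is a prime below $q$. We do not, however, get for free that every modulus is a multiple of $q$, so Theorem~\ref{no11} does not apply to $C_i$ directly. I would try instead to apply Theorem~\ref{main} (of which Theorem~\ref{no11} is a corollary). That would require all moduli of $C_i$ not divisible by $q$ to exceed $10^{28}/q$, which by Lemma~\ref{bigcoprime} holds only when the reciprocal sum of $C_i$ is below $1+10^{-28}$. A single covering can have a large reciprocal sum, so this route fails for an individual $C_i$.

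This suggests that the intended argument uses the disjointness across the family more cleverly, and I expect that to be the main obstacle. A plausible route is to use Theorem~\ref{FilasetaKalogirou} or Theorem~\ref{reciprocalsum} together with a counting bound on the moduli. For instance, the minimum moduli must avoid values for which no distinct covering exists at all. Known results say the minimum modulus of a distinct covering cannot be too large, and small cases impose further restrictions; for example, a distinct covering with minimum modulus $2$ or $3$ exists, but the candidate values for the $m_i$ are limited. So the plan is to bound the number of admissible values for the minimum modulus.

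Concretely, I would show the following:
\begin{enumerate}
\item Every prime $q\ge 11$ is inadmissible as a minimum modulus in this setting.
\item Only the values $2,\dots,10$, together with composite values, remain as candidates for the $m_i$.
\item Composite values $m\ge 11$ are handled by passing to a prime factor $p$ of $m$ and projecting $C_i$ onto the multiples of $p$.
\end{enumerate}
If all three hold, at most the nine values $2,\dots,10$ can serve as minimum moduli, contradicting the existence of ten distinct $m_i$; this would give $N<10$. Step 3 is the weakest point: I do not see how to make the projection argument rigorous, and it may instead need to be replaced by a restriction argument within a residue class, using Theorem~\ref{no11} for the prime $p$.
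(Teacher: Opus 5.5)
\textbf{There is a genuine gap: Step~1 cannot be proved one covering at a time.} For a single distinct covering the statement is false. Distinct coverings with minimum modulus $40$ are known (Nielsen). Adjoining the single class $0\mod{11}$ to such a covering gives a distinct covering with minimum modulus exactly $11$, and the same trick realizes every value from $2$ to $40$.

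So no argument that counts admissible minimum moduli of the individual $C_i$ can reach $N\le 9$. Your own observation that Theorem~\ref{main} does not apply to a single $C_i$ is a symptom of this, and Steps~2 and~3 have the same problem. The hypothesis that moduli are disjoint across the family has to be used to build one new covering, not to constrain each $C_i$ separately. Restricting a covering to a residue class modulo $p$, as you suggest at the end, also goes the wrong way. For $p\nmid m$ it sends both $m$ and $pm$ to the modulus $pm$, which destroys distinctness.

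\textbf{The missing idea is to place the ten coverings in the ten nonzero residue classes modulo $11$.} For $C=\{a_j\mod{m_j}\}$ let $11C+i=\{11a_j+i\mod{11m_j}\}$. Then put $\mathcal{C}=\{0\mod{11}\}\cup\bigcup_{i=1}^{10}(11C_i+i)$.

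\emph{Moduli.} Every modulus of $\mathcal{C}$ is a multiple of $11$. The moduli are distinct because each $C_i$ has distinct moduli, the modulus sets of different $C_i$ are disjoint, and $11m\ge 22$ for every $m\ge 2$.

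\emph{Covering.} Multiples of $11$ lie in $0\mod{11}$. If $k\equiv i\mod{11}$ with $1\le i\le 10$, then $(k-i)/11$ lies in some $a\mod{m}$ from $C_i$. Hence $k$ lies in $11a+i\mod{11m}$, so $\mathcal{C}$ covers $\mathbb{Z}$.

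This contradicts Theorem~\ref{no11} with $q=11$ at once, with no information about minimum moduli needed. It also explains why the threshold is $10=11-1$.
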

\begin{proof}
Assume that there exists a family of $10$ distinct covering systems that share pairwise no common modulus. Let
\[\CF=\{C_1,\ldots,C_{10}\}\]
be such a family. For a covering system 
\[C=\{a_1 \mod {m_1}, \ldots, a_n \mod{m_n}\},\]
define
\[xC+y:=\{xa_1 +y\mod{xm_1},\ldots,xa_n+y \mod{xm_n}\}.\]
Let
\[\CC:=\{0 \mod{11}\}\cup\bigcup_{i=1}^{10}\,(11C_i+i).\]
We claim that $\CC$ is a distinct covering system using only multiples of $11$ as moduli. The fact that $\CC$ is distinct follows from the fact that the covering systems in $\CF$ pairwise share no common modulus. The fact that all moduli in $\CC$ are multiples of $11$ is immediate from the definition. 

We now show that $\CC$ is indeed a covering system. Let $k \in \Z$. If $11 \mid k$, then $0 \mod{11}$ covers $k$. If $11 \nmid k$, then there is some $i \in \{ 1, 2, \ldots, 10\}$ such that $k \equiv i \mod{11}$. It follows that $(k-i)/11$ is an integer. Because $C_i$ is a covering system, there is some arithmetic progression $a \mod m$ in $C_i$ such that $(k-i)/11 \equiv a \mod m$. It follows that $k \equiv 11a+i \mod{11m}$, i.e. that $k$ is covered by an arithmetic progression in $11C_i+i \in \CC$. Because $k \in \Z$ was arbitrary, we deduce that $\CC$ is indeed a covering system. This contradicts Theorem~\ref{no11} with $q = 11$. 
\end{proof}

\section{The distortion method}\label{sect2}
In this section, we set up some notation and recall some key results of the distortion method. We attempt to stay brief, and refer the reader to one of \cite{BalisterExposition, Balister, Cummings} for a gentler introduction to the distortion method. 

Let $D=\{d_1,\ldots,d_n\}$ be a finite set of integers such that the smallest integer in $D$ is a prime $q \ge 11$ and every integer $d \in D$ that is not divisible by $q$ satistfies $d> M = 10^{28}/q$. Let $\CA=\{A_d:d \in D\}$ be a finite collection of arithmetic progressions with moduli set $D$. Our goal is to show that $\CA$ is not a covering system.

Let $Q$ be the least common multiple of all integers in $D$, and write
\[Q=\prod_{i=1}^J p_i^{\nu_i},\]
where $p_1<p_2<\cdots<p_J$ are the distinct prime divisors of $Q$. 
We may add additional multiples of $q$ to $D$ without affecting what we want to show, so there is no harm in assuming that $p_1=2, p_2=3, \ldots$, and we do so. 
We reveal the arithmetic progressions in stages according to their prime divisors. 
Let $P^{+}(d)$ denote the largest prime divisor of $d$.  
For $j \in \{ 1, 2, \ldots, J\}$, let 
\[Q_j:=\prod_{i=1}^jp_i^{\nu_i}\]
be the $p_j$-smooth part of $Q$,
and
\[B_j:=\bigcup_{\substack{1 \leq i \leq n\\ P^+(d_i)=p_j}}\{a \in \mathbb Z/Q \mathbb Z :a \equiv a_i \mod{d_i}\} \]
be the set of integers modulo $Q$ that are covered by arithmetic progressions that are revealed at stage $j$. 

Following \cite{Balister}, we construct a sequence of probability measures $\P_0,\P_1,\ldots,\P_J$ on $\Z/Q\Z$, defined in terms of some free parameters $\delta_1,\ldots,\delta_J \in [0,1/2]$. We first set up some notation. For every $j \in \{0,1,\ldots,J\}$, let $\pi_j:\Z/Q\Z \to \Z/Q_j\Z$ denote the natural projection. For every $x \in \Z/Q\Z$, we define
\[
F_j(x):=\{y \in \Z/Q\Z:\pi_j(y)=\pi_j(x)\}.
\]
We are now ready to define the measures. We start by letting $\P_0$ be the uniform measure over $\Z/Q\Z$. Let $j \in \{ 1, 2, \ldots, J \}$, and suppose that we have already defined the measure $\P_{j-1}$. 
We define
\[
\alpha_j(x):=\frac{|F_{j-1}(x) \cap B_j|}{|F_{j-1}(x)|}.
\]
For $x \in \Z/Q\Z$, we define $\P_j(x)$ as follows. We condition on whether $\alpha_j(x)<\delta_j$. If $\alpha_j(x)<\delta_j$, we define
\[\P_j(x):=\P_{j-1}(x) \cdot \frac{\mathbbm{1}_{x \notin B_j}}{1-\alpha_j(x)}.\]
If $\alpha_j(x) \geq \delta_j$, we define
\[\P_j(x):=\P_{j-1}(x)\cdot \begin{cases}
\frac{\alpha_j(x)-\delta_j}{\alpha_j(x)(1-\delta_j)} & \text{if } x \in B_j,\\
\frac{1}{1-\delta_j} & \text{if } x \notin B_j.
\end{cases}\]
The fact that these are indeed probability measures is proven in \cite{Balister}. We also introduce the following definition. For any function $f:\Z/Q\Z \to \C$, and any $j \in \{0,1,\ldots,J\}$, we define
\[\E_j[f]:=\sum_{x \in \Z/Q\Z}f(x)\,\P_j(x).\]
Finally, for $k \geq 1$, we define the $k$-\textit{th moment}
\[M_j^{(k)}:=\E_{j-1}[\alpha_j^k].\]
We now recall some useful facts based on \cite{Balister}.  The next result is Theorem~3.1 in \cite{Balister}.

\begin{lem}[\cite{Balister}] \label{maindistortion}
If 
\[
\sum_{\substack{1 \le i \le J\\ \delta_{i} = 0}} M_i^{(1)} + \sum_{\substack{1 \le i \le J\\ \delta_{i} \ne 0}} \min\bigg(M_i^{(1)},\frac{M_i^{(2)}}{4\delta_i(1-\delta_i)}\bigg) < 1,
\]
then $\CA$ does not cover the integers. 
\end{lem}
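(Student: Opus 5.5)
The plan is to track how much $\P_J$-mass sits on integers that are covered, stage by stage. I will show that the mass on the stage-$i$ covered set $B_i$ is frozen once stage $i$ is complete, and that at that moment it is bounded by the $i$-th summand in the hypothesis. Adding up over $i$, $\P_J(B_1\cup\cdots\cup B_J)<1$. Since $\P_J$ is a probability measure on $\Z/Q\Z$, some residue class then lies outside every $B_i$, so $\CA$ does not cover the integers.

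\emph{Step 1: $\P_j(x)$ depends only on $\pi_j(x)$.} I will prove this by induction on $j$; it is trivial for $j=0$. Each modulus $d_i$ with $P^+(d_i)=p_j$ divides $Q_j$, so membership in $B_j$ depends only on $\pi_j(x)$. Also, $\alpha_j(x)$ depends only on $F_{j-1}(x)$, hence only on $\pi_{j-1}(x)$. The defining formula for $\P_j$ multiplies $\P_{j-1}(x)$ by a factor that depends only on $\alpha_j(x)$ and on whether $x\in B_j$. So the claim passes from $j-1$ to $j$.

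\emph{Step 2: fibre masses are preserved.} Fix a fibre $F=F_{j-1}(x)$ of size $N$. By Step 1, $\P_{j-1}$ equals a constant $u$ on $F$; write $\alpha=\alpha_j(x)$ and compute $\P_j(F)$ in the two cases.
\begin{itemize}
\item If $\alpha<\delta_j$, then $\P_j(F)=uN(1-\alpha)/(1-\alpha)=uN$.
\item If $\alpha\ge\delta_j$, then $\P_j(F)=uN\bigl[\alpha\cdot\frac{\alpha-\delta_j}{\alpha(1-\delta_j)}+\frac{1-\alpha}{1-\delta_j}\bigr]=uN$.
\end{itemize}
Hence $\P_j(F)=\P_{j-1}(F)$ for every fibre $F$ of $\pi_{j-1}$, and therefore $\P_j(S)=\P_{j-1}(S)$ for every set $S$ that is a union of such fibres. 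For $i<j$, the set $B_i$ is determined modulo $Q_i\mid Q_{j-1}$, so it is such a union. Iterating gives $\P_J(B_i)=\P_i(B_i)$ for each $i$.

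\emph{Step 3: bounding $\P_i(B_i)$.} The same fibre computation gives the $B_i$-part of the mass:
\begin{itemize}
\item on fibres with $\alpha_i<\delta_i$, the mass on $B_i$ is $0$;
\item on fibres with $\alpha_i\ge\delta_i$, it is $uN(\alpha_i-\delta_i)/(1-\delta_i)$.
\end{itemize}
Summing over fibres,
\[
\P_i(B_i)=\E_{i-1}\Bigl[\frac{\max(\alpha_i-\delta_i,0)}{1-\delta_i}\Bigr].
\]
This integrand admits two pointwise bounds.
\begin{itemize}
\item It is at most $\alpha_i$, because $\alpha_i\le 1$. Taking expectations gives $\P_i(B_i)\le M_i^{(1)}$, and this is the only bound needed when $\delta_i=0$.
\item When $\delta_i>0$, it is at most $\alpha_i^2/(4\delta_i(1-\delta_i))$. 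This follows from $\alpha^2-4\delta(\alpha-\delta)=(\alpha-2\delta)^2\ge0$. Taking expectations gives $\P_i(B_i)\le M_i^{(2)}/(4\delta_i(1-\delta_i))$.
\end{itemize}
Since each bound holds separately after taking expectations, $\P_i(B_i)$ is at most the minimum of the two. Summing over $i$ and using the hypothesis gives $\P_J\bigl(\bigcup_i B_i\bigr)<1$, which completes the argument.

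The main obstacle I anticipate is Step 2: setting up the fibre-invariance bookkeeping correctly. In particular, one must be careful that $B_i$ is a union of $\pi_{j-1}$-fibres, which uses $P^+(d)=p_i\Rightarrow d\mid Q_i$. One must also use that $\P_{j-1}$ is constant on each fibre, so that $\alpha_j$, defined with counting measure, matches the $\P_{j-1}$-proportion of $B_j$ within the fibre. Once this is in place, the inequalities in Step 3 are elementary.
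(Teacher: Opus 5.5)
Your proposal is correct and follows the same route as the proof of Theorem~3.1 in \cite{Balister}, which the paper cites rather than reproves: preservation of fibre masses gives $\P_J(B_i)=\P_i(B_i)$, and $\P_i(B_i)=\E_{i-1}[\max(\alpha_i-\delta_i,0)/(1-\delta_i)]$ is bounded pointwise by both $\alpha_i$ and $\alpha_i^2/(4\delta_i(1-\delta_i))$. Your union bound also needs the $\P_j$ to be nonnegative, which you leave unstated but which is immediate from the defining formulas.
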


We will take $\delta_{i} = 0$ for $1 \le i \le 18$.  Then it follows from the definition of $\mathbb P_i$ that $\mathbb P_i$ is the uniform measure over $\mathbb Z/Q\mathbb Z$ for every $i \in \{1, 2, \ldots, 18\}$.  
Thus, for such an $i$, the probability an $x \in \Z/Q\Z$ falls into some fixed residue class modulo $d \in D$ is $1/d$.
As a consequence of Theorem~3.2 (and its proof) in \cite{Balister}, we obtain the following. 

\begin{lem}[\cite{Balister}]\label{mainmomentbounds}
Given the above notation, if $\delta_{i} = 0$ for $1 \le i \le 18$, then
\begin{equation}\label{mainmomentbounds:eq1}
\sum_{i=1}^{18}M_i^{(1)} \leq \sum_{\substack{d\in D\\d \text{ }p_{18}\text{-smooth}}} \frac{1}{d},
\end{equation}
and
\[
M_i^{(2)} \leq \frac{1}{(p_i-1)^2}\sum_{m_1,m_2|Q_{i-1}}\frac{\nu(\lcm(m_1,m_2))}{\lcm(m_1,m_2)}
\leq \frac{1}{(p_i-1)^2}\prod_{j <i}\bigg(1+\frac{3p_j-1}{(1-\delta_j)(p_j-1)^2}\bigg), \]
where $\nu$ is a multiplicative function defined at prime powers by
\[
\nu(p_i^m)=\frac{1}{1-\delta_i}.
\]
\end{lem}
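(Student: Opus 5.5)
The two assertions are of different nature, so I will prove them separately, following the argument of Theorem~3.2 of \cite{Balister} specialized to $\delta_i=0$ for $i\le 18$.

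\emph{First moment.} Since $\delta_i=0$ for $1\le i\le 18$, each $\P_{i-1}$ with $i\le 18$ is uniform on $\Z/Q\Z$. Linearity and the definition of $\alpha_i$ give
\[
M_i^{(1)}=\E_{i-1}[\alpha_i]=\sum_{x}\P_{i-1}(x)\frac{|F_{i-1}(x)\cap B_i|}{|F_{i-1}(x)|}.
\]
Under the uniform measure this equals $|B_i|/Q$, because the fibres $F_{i-1}(x)$ partition $\Z/Q\Z$ into sets of equal size. The union bound then gives
\[
|B_i|/Q\le \sum_{d\in D,\ P^+(d)=p_i}1/d.
\]
Summing over $i\le 18$ gives every $d\in D$ with $P^+(d)\le p_{18}$, that is, every $p_{18}$-smooth $d$. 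This proves \eqref{mainmomentbounds:eq1}.

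\emph{Second moment.} I follow Balister et al.'s general bound.
\begin{enumerate}[label=(\roman*)]
\item Write each $d$ with $P^+(d)=p_i$ as $d=m\,p_i^{k}$ with $m\mid Q_{i-1}$ and $k\ge1$.
\item Fix $x$. The proportion of the fibre $F_{i-1}(x)$ covered by $A_d$ is $p_i^{-k}\le p_i^{-1}$ when $\pi_{i-1}(x)$ lies in the right class mod $m$, and $0$ otherwise.
\item Group the moduli by $m$. The distinct $k$'s give at most $\sum_{k\ge1}p_i^{-k}=1/(p_i-1)$. Hence
\[
\alpha_i(x)\le \frac{1}{p_i-1}\sum_{m\mid Q_{i-1}}\mathbbm 1[x\in A'_m],
\]
where $A'_m$ is some residue class mod $m$.
\item Squaring and taking expectations reduces the problem to bounding $\P_{i-1}(x\in A'_{m_1}\cap A'_{m_2})$. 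This intersection is either empty or a single class mod $\lcm(m_1,m_2)$.
\item The key lemma from \cite{Balister}: for any $c$ with $c\mid Q_{i-1}$, $\P_{i-1}(x\equiv a \bmod c)\le \nu(c)/c$. It holds because each step $j$ multiplies the density by at most $1/(1-\delta_j)$. The distortion is relevant only at primes $p_j$ dividing $c$, since for other primes the distortion at stage $j$ averages out over residues mod $p_j^{\nu_j}$. I will cite this from the proof of their Theorem~3.2 rather than reprove it.
\end{enumerate}
Steps (iv) and (v) give the first inequality.

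\emph{The product.} The sum is over pairs $(m_1,m_2)$ and the summand depends only on $\lcm(m_1,m_2)$, which is multiplicative. So the sum factors as an Euler product over $p_j$, $j<i$. For fixed $p=p_j$, the local factor runs over pairs of exponents $(a,b)$. The pair $(0,0)$ contributes $1$. Pairs with $\max(a,b)=e\ge1$ number $2e+1$, each contributing $\nu(p^e)/p^e=\frac{1}{1-\delta_j}p^{-e}$. Then
\[
\sum_{e\ge1}\frac{2e+1}{p^e}=\frac{2p}{(p-1)^2}+\frac{1}{p-1}=\frac{3p-1}{(p-1)^2}.
\]
This gives the stated product and completes the second inequality.

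The main obstacle is step (v), the uniform density bound for $\P_{i-1}$ on residue classes. For that I rely on \cite{Balister} as the statement permits.
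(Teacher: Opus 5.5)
Your route is the one the paper relies on. The paper gives no argument of its own; it simply invokes Theorem~3.2 of \cite{Balister} and its proof. Most of your argument is correct: the first-moment computation via uniformity of $\P_{i-1}$ for $i\le 18$ plus a union bound, steps (iv)--(v), and the Euler-product evaluation with $2e+1$ exponent pairs.

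Step (iii), however, does not hold as written. For a fixed $m\mid Q_{i-1}$, the progressions $A_{mp_i^k}$ with different $k$ are arbitrary classes $a_{m,k}\bmod mp_i^k$, and their reductions mod $m$ need not coincide. For example, when $p_i=3$, the progressions $0\bmod 6$ and $1\bmod 18$ both have $m=2$, but they reduce to different classes mod $2$. So in general there is no single class $A'_m$ with $\sum_k p_i^{-k}\mathbbm{1}[x\equiv a_{m,k}\bmod m]\le (p_i-1)^{-1}\mathbbm{1}[x\in A'_m]$.

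The repair is to postpone the summation over $k$ until after taking expectations. Write
\[
\alpha_i(x)\le \sum_{m\mid Q_{i-1}}\ \sum_{k\ge 1} p_i^{-k}\,\mathbbm{1}\bigl[x\equiv a_{m,k}\bmod m\bigr],
\]
where the sum runs over pairs $(m,k)$ with $mp_i^k\in D$, each occurring once since the moduli are distinct. Then square and take $\E_{i-1}$. Each cross term is the $\P_{i-1}$-measure of the intersection of a class mod $m_1$ with a class mod $m_2$. By (iv) and (v) this is at most $\nu(\lcm(m_1,m_2))/\lcm(m_1,m_2)$, no matter which classes occur. Summing over $k_1,k_2\ge 1$ then produces exactly the factor $(p_i-1)^{-2}$, which gives the first inequality for $M_i^{(2)}$.

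A small point in the last step: the local factor at $p_j$ is a finite sum over $e\le \nu_j$. So the Euler product yields the stated product as an upper bound rather than an identity, which is all that is needed.
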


Our main goal for what follows is to bound the first and second moments, taking advantage of the specific conditions we have on $D$. We must also carefully choose the $\delta_i$'s in order to get the argument to work. After choosing $18$ with some experimentation in its role above, we optimized our choices of $\delta_i$ numerically to $3$ digits for the ranges shown below as this both was sufficient for what we wanted to show and makes it easy to verify our results.
The $\delta_{i}$'s we chose are
\begin{gather*}
\delta_{i}=0  \quad\text{ for $1 \le i \le 18$},  \\[4pt]
\delta_{19}=0.263, \quad
\delta_{20}=0.266, \quad
\delta_{21}=0.271, \quad
\delta_{22}=0.271, \quad
\delta_{23}=0.273,  \\[4pt]
\delta_{24}=0.273, \quad
\delta_{25}=0.272, \quad
\delta_{26}=0.274, \quad
\delta_{27}=0.278, \quad
\delta_{28}=0.280,  \\[4pt]
\delta_{29}=0.283, \quad
\delta_{30}=0.286, \quad
\delta_{31}=0.280, \quad
\delta_{32}=0.282, \quad
\delta_{33}=0.282,  \\[4pt]
\delta_{34}=0.285, \quad
\delta_{35}=0.283, \quad
\delta_{36}=0.286, \quad
\delta_{37}=0.286, \quad
\delta_{38}=0.287, \quad
\delta_{39}=0.288, \\[4pt]
\delta_{i}=0.292 \quad \text{for $40 \le i \le 50$}, \qquad
\delta_{i}=0.305 \quad \text{for $51 \le i \le 100$},  \\[4pt]
\delta_{i}=0.322, \quad \text{for $101 \le i \le 200$}, \qquad
\delta_{i}=0.338 \quad \text{for $201 \le i \le 500$},  \\[4pt]
\delta_{i}=0.353 \quad \text{for $501 \le i \le 1000$}, \qquad
\delta_{i}=0.365 \quad \text{for $1001 \le i \le 2000$},  \\[4pt]
\delta_{i}=0.381 \quad \text{for $2001 \le i \le 10000$}, \qquad
\delta_{i}=0.404 \quad \text{for $10001 \le i \le 20000$},  \\[4pt]
\delta_{i}=0.426 \quad \text{for $20001 \le i \le 50000$}, \qquad
\delta_{i}=0.467 \quad \text{for $50001 \le i \le 100000$}, \\[4pt]
\delta_{i} = 0.5 \quad \text{for all $i > 10^{5}$.}
\end{gather*}

\section{Bounding the Moments}
\subsection{First Moment Bounds}
We fix a prime $q \ge 11$, and let $\mathcal A$ be as in Theorem~\ref{main}.  
As discussed in the introduction, we know from \cite{Balister} that $\mathcal A$ is not a covering if the minimum modulus $q$ exceeds $615999$.  Thus, Theorem~\ref{main} follows in this case, so we consider now $q \le 615997$, the largest prime less than or equal to $615999$.
To bound the first moments, it will be useful to partition the elements of $D$ into two sets, depending on whether or not they are divisible by $q$. With this in mind, let 
\[D_1=\{d \in D: q \mid d\},\]
and
\[D_2=\{d\in D: q \nmid d\}.\]
We are wanting to prove Theorem~\ref{main}, so we are assuming that
each $d \in D_2$ satisfies $d> M =10^{28}/q$. 
We make this partition, as we wish to split up the bound on the right-hand side of \eqref{mainmomentbounds:eq1} into two sums, conditioning on whether $d \in D_1$ or $d \in D_2$. We deal with the first of these cases as follows. 

\begin{lem}\label{moment1d1}
Given the above notation, we have
\[
\sum_{\substack{d \in D_1\\d \text{ } p_{18}-\text{smooth}}} \frac{1}{d} \leq 
\begin{cases}
0.690864963 &\text{if } q = 11, \\
0.584578046 &\text{if } 13 \le q \le 61, \\
0 &\text{if } q > 61.
\end{cases}
\]
\end{lem}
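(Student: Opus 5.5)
The plan is to drop the distinctness and covering structure entirely. I would bound the sum by the full Euler product over all $p_{18}$-smooth multiples of $q$, and then check the constant numerically.

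Recall that in Section~\ref{sect2} we arranged that $p_1=2,p_2=3,\ldots$ are the consecutive primes, so $p_{18}=61$. Every $d\in D_1$ has the form $d=qm$ with $m\ge 1$ an integer. If $d$ is $61$-smooth, then $q\mid d$ forces $q\le 61$. Hence for $q>61$ the sum is empty and equals $0$, which gives the third case.

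For $q\le 61$, I would note that the map $d\mapsto m=d/q$ is injective from the set of $61$-smooth $d\in D_1$ into the set of $61$-smooth positive integers $m$. This holds because the elements of $D_1$ are distinct. Since all terms are positive, extending the sum to every $61$-smooth $m$ gives
\[
\sum_{\substack{d \in D_1\\ d\ 61\text{-smooth}}}\frac1d \;\le\; \frac1q\sum_{\substack{m\ge1\\ m\ 61\text{-smooth}}}\frac1m \;=\;\frac1q\prod_{p\le 61}\Big(1-\frac1p\Big)^{-1}.
\]
The equality is the standard Euler product factorization. It is legitimate because each factor $\sum_{k\ge0}p^{-k}$ is an absolutely convergent geometric series, and there are only finitely many factors.

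It remains to evaluate $P:=\prod_{p\le 61}p/(p-1)$ over the $18$ primes $2,3,\ldots,61$, which gives $P\approx 7.5995$. For $q=11$ the bound is $P/11\approx 0.690864963$. For $13\le q\le 61$ the bound $P/q$ is decreasing in $q$, so it is at most $P/13\approx 0.584578046$. To make the stated decimals rigorous, I would compute $P$ exactly as a rational number and round upward.

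There is no real obstacle in this argument. The only points needing care are the following.
\begin{itemize}
\item Confirm that the convention $p_i=$ the $i$-th prime really is in force, so that $p_{18}=61$.
\item Do the arithmetic for $P$ exactly, rounding upward, so that the stated decimal upper bounds are valid.
\end{itemize}
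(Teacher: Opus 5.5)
Your proposal is correct and is essentially the paper's proof: it is empty for $q>61$, and for $q\le 61$ you bound by the full Euler product over the $61$-smooth multiples of $q$. The paper writes the bound as $\frac{1}{q-1}\prod_{p\le 61,\,p\ne q}\frac{p}{p-1}$, which is algebraically identical to your $\frac{1}{q}\prod_{p\le 61}\frac{p}{p-1}\approx 7.5995146/q$, so the two give the same constants $0.690864963$ and $0.584578046$.
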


\begin{proof}
Since $p_{18} = 61$ and $q$ divides every $d \in D_{1}$, we see the sum is empty if $q > 61$, giving the last bound above.  
For $11 \le q \le 61$, we make use of the inequality 
\[
\sum_{\substack{d \in D_1\\d \text{ } p_{18}-\text{smooth}}} \frac{1}{d} 
\leq \bigg(\frac{1}{q}+\frac{1}{q^2}+\cdots\bigg)\prod_{\substack{p \leq p_{18}\\p \neq q}}\bigg(1+\frac{1}{p}+\frac{1}{p^2}+\cdots\bigg)=\frac{1}{q-1}\prod_{\substack{p \leq p_{18}\\p \neq q}}\frac{p}{p-1}.
\]
A direct computation yields the bounds indicated in the case $q = 11$ and $13 \le q \le 61$. 
\end{proof}

We now look at the contribution of the sum over $d \in D_2$.

\begin{lem}\label{moment1d2}
Given the above notation, we have
\[
\sum_{\substack{d \in D_2\\d \text{ } p_{18}-\text{smooth}}}\frac{1}{d} < 1.282 \cdot 10^{-11}.
\]
\end{lem}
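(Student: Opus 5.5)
We need to bound the sum of $1/d$ over $p_{18}$-smooth integers $d > M = 10^{28}/q$. Since we may drop the condition $q\nmid d$ and keep only $d>M$ and $d$ being $61$-smooth, an upper bound is
\[
\sum_{\substack{d\in D_2\\ d\ p_{18}\text{-smooth}}}\frac1d \le \sum_{\substack{d>M\\ P^+(d)\le 61}}\frac1d .
\]
Since $q\le 615997$, we have $M \ge 10^{28}/615997 > 1.62\cdot 10^{22}$. Thus it suffices to bound the tail $\sum_{d>X,\,P^+(d)\le 61} 1/d$ with $X=10^{28}/615997$. (For $q\le 61$ one could also remove $q$ from the allowed primes, but the uniform bound should already suffice.)

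To bound the tail I would use Rankin's trick. For any $0<\sigma<1$,
\[
\sum_{\substack{d>X\\ P^+(d)\le 61}}\frac1d \le X^{-\sigma}\sum_{P^+(d)\le 61}\frac{1}{d^{1-\sigma}} = X^{-\sigma}\prod_{p\le 61}\Bigl(1-p^{\sigma-1}\Bigr)^{-1}.
\]
I would then optimize $\sigma$ numerically. The logarithm of the bound is $-\sigma\log X-\sum_{p\le61}\log(1-p^{\sigma-1})$, which is convex in $\sigma$, so there is a unique optimum. Heuristically, the product for $2$ is the dangerous factor, since $1-2^{\sigma-1}$ approaches $0$ as $\sigma\to1$. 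A value of $\sigma$ around $0.6$–$0.75$ should balance $X^{-\sigma}\approx 10^{-22\sigma}$ against the 18-fold product.

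The main obstacle is that Rankin's trick may be too lossy to reach $1.282\cdot10^{-11}$. A quick estimate: with $\sigma=0.7$ we get $X^{-\sigma}\approx 10^{-15.5}$, while the product is $\prod(1-p^{-0.3})^{-1}$. Here the prime $2$ contributes about $5.3$, small primes contribute factors of roughly $2$–$3$, and the total is likely $10^{5}$–$10^{6}$. That gives a bound around $10^{-10}$, which is borderline.

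The stated constant $1.282\cdot10^{-11}$ looks like the output of an exact computation, so if Rankin fails I would instead compute the tail exactly. I would enumerate all $61$-smooth numbers up to a large cutoff, use the identity
\[
\sum_{d>X}\frac1d=\prod_{p\le61}\frac{p}{p-1}-\sum_{d\le X}\frac1d,
\]
and handle the large-$d$ part by a recursive splitting over the largest prime. Concretely, I would write the tail as $\sum_{p^k}\ p^{-k}\,T(X/p^k)$, with Rankin's bound applied only at deep levels of the recursion, where it is sharp enough. This computer-assisted evaluation over the $61$-smooth numbers is the step I expect to require the most care, together with rigorous rounding so that the final sum is certified below $1.282\cdot10^{-11}$.
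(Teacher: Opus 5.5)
Your main route is the paper's proof: drop the condition $q\nmid d$, use $d>M\ge 10^{28}/615997$, and apply Rankin's trick over the $61$-smooth numbers, with the paper fixing $\epsilon=\sigma=3/4$. Your worry that Rankin is too lossy is unfounded, because $(10^{28}/615997)^{-3/4}\prod_{p\le 61}(1-p^{-1/4})^{-1}\approx 1.2815\cdot 10^{-11}$, which is exactly where the stated constant comes from; your Euler-product estimate was too pessimistic (even $\sigma=0.7$ gives about $1.9\cdot 10^{-11}$), so the exact-enumeration fallback is never needed.
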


\begin{proof}
By assumption, any $d \in D_2$ satisfies $d>M$.  We deduce, for any $\epsilon>0$ and any odd prime $q \ge 11$, that
\[
\sum_{\substack{d \in D_2\\d \text{ } p_{18}-\text{smooth}}}\frac{1}{d}
< \sum_{\substack{d \in D_2\\d \text{ } p_{18}-\text{smooth}}}\bigg(\frac{d}{M}\bigg)^{\epsilon}\frac{1}{d}
< \frac{1}{M^{\epsilon}}\prod_{1 \leq i \leq 18}\frac{p_i^{1-\epsilon}}{p_i^{1-\epsilon}-1}. 
\]
Using $M = 10^{28}/q \ge 10^{28}/615997$ and taking $\epsilon=3/4$, a direct computation gives the claimed upper bound. 
\end{proof}

Together, Lemma~\ref{moment1d1},  Lemma~\ref{moment1d2} and Lemma~\ref{mainmomentbounds}, 
with $\delta_{i} = 0$ for $1 \le i \le 18$, imply 
\begin{equation}\label{firstmomentbound}
\sum_{i=1}^{18}M_i^{(1)} < 
\begin{cases}
0.690864964 &\text{if } q = 11, \\
0.584578047 &\text{if } 13 \le q \le 61, \\
1.282 \cdot 10^{-11} &\text{if } q > 61.
\end{cases}
\end{equation}

\subsection{Second Moment Bounds}
We now focus our attention on bounding the second moments. Towards this goal, let
\[T_{i,1}:=\frac{1}{(p_i-1)^2}\sum_{\substack{m_1,m_2|Q_{i-1}\\q \mid \gcd(m_1,m_2)}}\frac{\nu(\lcm(m_1,m_2))}{\lcm(m_1,m_2)},\]
and
\[T_{i,2}:=\frac{1}{(p_i-1)^2}\sum_{\substack{m_1,m_2|Q_{i-1}\\q \,\nmid\, \gcd(m_1,m_2)}}\frac{\nu(\lcm(m_1,m_2))}{\lcm(m_1,m_2)}.\]
Recalling Lemma~\ref{mainmomentbounds}, we find that
\[M_i^{(2)} \leq \frac{1}{(p_i-1)^2}\sum_{m_1,m_2|Q_{i-1}}\frac{\nu(\lcm(m_1,m_2))}{\lcm(m_1,m_2)}=T_{i,1}+T_{i,2}.\]

We turn to providing bounds for $T_{i,1}$ and $T_{i,2}$. To do so, we follow the ideas from the proof of Lemma 3.7 in \cite{Balister}. 

\begin{lem}\label{T1bound}
Let $\pi(q) = i_{0}$.  With the above notation, the inequality
\[T_{i,1} \leq \frac{q+1}{(q-1)^{2}(1-\delta_{i_{0}})(p_i-1)^2}\prod_{\substack{1 \leq j <i\\j \neq i_{0}}}\bigg(1+\frac{3p_j-1}{(1-\delta_j)(p_j-1)^2}\bigg)\]
holds for all $i \geq 19$.
\end{lem}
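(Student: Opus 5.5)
The sum defining $T_{i,1}$ runs over pairs $(m_1,m_2)$ of divisors of $Q_{i-1}$, and the summand $\nu(\lcm(m_1,m_2))/\lcm(m_1,m_2)$ is multiplicative in the pair. Following Lemma~3.7 of \cite{Balister}, I will bound it by an Euler product over the primes $p_j$ with $j<i$, letting the exponents of $p_j$ in $m_1,m_2$ range over all nonnegative integers. Since $i\ge 19$ and $q\le p_{18}$ is not guaranteed, I first note that if $i\le i_0$ then $q\nmid Q_{i-1}$. In that case the sum is empty, $T_{i,1}=0$, and the bound is trivial. So assume $i_0<i$, so that $q=p_{i_0}$ divides $Q_{i-1}$.

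Write $m_1=\prod_{j<i}p_j^{a_j}$ and $m_2=\prod_{j<i}p_j^{b_j}$. Then $\lcm(m_1,m_2)=\prod p_j^{\max(a_j,b_j)}$, and
\[
\nu(\lcm(m_1,m_2))=\prod_{j:\,\max(a_j,b_j)\ge1}\frac{1}{1-\delta_j}.
\]
The condition $q\mid\gcd(m_1,m_2)$ only constrains the pair $(a_{i_0},b_{i_0})$, requiring $a_{i_0},b_{i_0}\ge1$. Dropping the upper bounds $a_j,b_j\le\nu_j$ only adds nonnegative terms, so
\[
(p_i-1)^2T_{i,1}\le \Big(\sum_{a,b\ge1}\frac{1}{(1-\delta_{i_0})q^{\max(a,b)}}\Big)\prod_{\substack{j<i\\ j\ne i_0}}\Big(\sum_{a,b\ge0}\frac{\nu(p_j^{\max(a,b)})}{p_j^{\max(a,b)}}\Big),
\]
where $\nu(1)=1$.

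Next I evaluate the local factors. The number of pairs $(a,b)$ of nonnegative integers with $\max(a,b)=k$ is $2k+1$, and with both $a,b\ge1$ it is $2k-1$ for $k\ge1$. For $j\ne i_0$ the factor is therefore
\[
1+\frac{1}{1-\delta_j}\sum_{k\ge1}\frac{2k+1}{p_j^k}.
\]
Using $\sum_{k\ge1}k x^k=x/(1-x)^2$ and $\sum_{k\ge1}x^k=x/(1-x)$ with $x=1/p$, we get
\[
\sum_{k\ge1}\frac{2k+1}{p^k}=\frac{2p}{(p-1)^2}+\frac{1}{p-1}=\frac{3p-1}{(p-1)^2},
\]
which reproduces exactly the factor $1+\frac{3p_j-1}{(1-\delta_j)(p_j-1)^2}$ of Lemma~\ref{mainmomentbounds}. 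For $j=i_0$ the factor is
\[
\frac{1}{1-\delta_{i_0}}\sum_{k\ge1}\frac{2k-1}{q^k}=\frac{1}{1-\delta_{i_0}}\Big(\frac{2q}{(q-1)^2}-\frac{1}{q-1}\Big)=\frac{q+1}{(1-\delta_{i_0})(q-1)^2}.
\]
Multiplying these factors and dividing by $(p_i-1)^2$ gives the claimed inequality.

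The main point needing care is the justification that the restricted sum factors. This holds because both the lcm structure and $\nu$ are determined prime by prime, and the divisibility constraint involves only the $q$-coordinate. The remaining work is routine series evaluation.
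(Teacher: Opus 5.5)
Your proof is correct and follows essentially the same route as the paper, which packages your prime-by-prime count of exponent pairs into a multiplicative function $\chi$ with $\chi(p^t)=2t+1$ for $p\ne q$ and $\chi(q^t)=2t-1$, extends to an Euler product, and evaluates the same two series. Your explicit treatment of the case $i\le i_0$, where $T_{i,1}=0$ and the bound holds trivially, is a small point the paper leaves implicit.
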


\begin{proof}
Observe that $\pi(q) = i_{0}$ implies that $\delta_{i_{0}}$ is associated with the $i_{0}^{\rm th}$ prime $q$.
Let 
\[
\chi(m):=|\{(m_1,m_2) \in (\mathbb Z^{+})^{2}: q \mid \gcd(m_1,m_2), \lcm(m_1,m_2)=m\}|,
\]
and note that
\[
\sum_{\substack{m_1,m_2 \mid Q_{i-1}\\q \mid \gcd(m_1,m_2)}}\frac{\nu(\lcm(m_1,m_2))}{\lcm(m_1,m_2)}=\sum_{\substack{m \mid Q_{i-1}\\q \mid m}}\frac{\chi(m)\nu(m)}{m}.
\]
As noted in \cite{Balister}, without the condition $q \mid \gcd(m_1,m_2)$ in the definition of $\chi(m)$, the function $\chi$ is multiplicative and satisfies $\chi(p^t)=2t+1$ for primes $p$ and positive integers $t$.  In our case, the function $\chi(m)$ is still multiplicative, $\chi(p^t)=2t+1$ for $p \ne q$, and $\chi(q^t)=2t-1$. 
It follows that $\chi(m)\nu(m)/m$ is multiplicative as well, and that 
\[
\sum_{\substack{m \mid Q_{i-1}\\q \mid m}}\frac{\chi(m)\nu(m)}{m}
\le \Bigg(\sum_{t=1}^\infty \frac{2t-1}{(1-\delta_{i_{0}})q^t}\Bigg)\prod_{\substack{j<i\\j \neq i_{0}}}\Bigg(1+\sum_{t=1}^{\infty}\frac{2t+1}{(1-\delta_j)p_j^t}\Bigg).
\]
As noted in \cite{Balister}, one can show that
\[\sum_{t=1}^{\infty}\frac{2t+1}{p^t}=\frac{3p-1}{(p-1)^2}. \]
This implies that
\[
\sum_{t=1}^{\infty}\frac{2t-1}{q^t}=\frac{3q-1}{(q-1)^2}-\frac{2}{q-1}=\frac{q+1}{(q-1)^{2}}. 
\]
Hence, we obtain
\[
\sum_{\substack{m \mid Q_{i-1} \\q \mid m}}\frac{\chi(m)\nu(m)}{m} 
\leq \frac{q+1}{(1-\delta_{i_{0}})(q-1)^{2}}\prod_{\substack{1 \leq j <i\\j \neq i_{0}}}\bigg(1+\frac{3p_j-1}{(1-\delta_j)(p_j-1)^2}\bigg),
\]
which implies what we wanted to show. 
\end{proof}

The dependence on $q$ in Lemma~\ref{T1bound} is clearer if we write the bound on $T_{i,1}$ in Lemma~\ref{T1bound} as
\begin{gather*}
\frac{q+1}{(1-\delta_{i_{0}})(q-1)^{2}} \bigg(1+\frac{3q-1}{(1-\delta_{i_{0}})(q-1)^2}\bigg)^{-1} \cdot \dfrac{1}{(p_i-1)^2} \prod_{1 \leq j < i} \bigg(1+\frac{3p_j-1}{(1-\delta_j)(p_j-1)^2} \bigg) \\[5pt]
=\frac{q+1}{(1-\delta_{i_{0}})(q-1)^{2}+3q-1} \cdot \dfrac{1}{(p_i-1)^2} \prod_{1 \leq j < i} \bigg(1+\frac{3p_j-1}{(1-\delta_j)(p_j-1)^2} \bigg).
\end{gather*}
Defining $f(q) = (q+1)/((1-\delta_{i_{0}})(q-1)^{2}+3q-1)$, where $q \ge 11$, one can check that $f(q) = 1/q$ if $\delta_{i_{0}} = 0$ and $f(q) \le 2/(q+2)$ if $\delta_{i_{0}} \in [0,1/2]$.  Since $\delta_{5} = \delta_{6} = \cdots = \delta_{18} = 0$ and all other choices of $\delta_{i}$ are in $[0,1/2]$, we see that $f(11) = 1/11$ and $f(q) \le 1/13$ for all $q \ge 13$.  
A direct computation using the choices for $\delta_{i}$ at the end of Section~\ref{sect2} gives us 
\begin{equation}\label{t1bound}
\sum_{i=19}^{10^5}\frac{T_{i,1}}{4\delta_i(1-\delta_i)} <
\begin{cases}
0.228655424 &\text{if } q = 11 \\
0.193477667 &\text{if } q \ge 13.
\end{cases}
\end{equation}
We now look at bounding $T_{i,2}$. 

\begin{lem}\label{T2bound}
Given the conditions of Theorem~\ref{main}, the inequality
\[
T_{i,2} \leq \frac{1}{M^{\epsilon}(p_i-1)^2}\prod_{j <i}\bigg(1+\frac{3p_j^{1-\epsilon}-1}{(1-\delta_j)(p_j^{1-\epsilon}-1)^2}\bigg)
\]
holds for all $i \geq 19$ and every $\epsilon>0$.
\end{lem}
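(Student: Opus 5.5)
The plan is to run the argument of Lemma~\ref{T1bound} again, now with Rankin's trick inserted, in the same way Lemma~\ref{moment1d2} handled the first moment. Everything rests on one claim: every pair $(m_1,m_2)$ that contributes to $T_{i,2}$ satisfies $\lcm(m_1,m_2)>M$. Granting that, for any $\epsilon>0$ we have
\[
\frac{\nu(\lcm(m_1,m_2))}{\lcm(m_1,m_2)} \le \Big(\frac{\lcm(m_1,m_2)}{M}\Big)^{\epsilon}\frac{\nu(\lcm(m_1,m_2))}{\lcm(m_1,m_2)} = \frac{1}{M^{\epsilon}}\cdot\frac{\nu(\lcm(m_1,m_2))}{\lcm(m_1,m_2)^{1-\epsilon}}.
\]
After this step we drop the restriction $q\nmid\gcd(m_1,m_2)$. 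This only enlarges the sum, because every term is nonnegative.

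The next step is to evaluate the enlarged sum multiplicatively. As in the proof of Lemma~\ref{T1bound}, let $\chi(m)$ be the number of pairs $(m_1,m_2)$ with $\lcm(m_1,m_2)=m$, now without any divisibility condition. This $\chi$ is multiplicative with $\chi(p^t)=2t+1$, so $\chi(m)\nu(m)/m^{1-\epsilon}$ is multiplicative. Hence
\[
\sum_{m_1,m_2\mid Q_{i-1}}\frac{\nu(\lcm(m_1,m_2))}{\lcm(m_1,m_2)^{1-\epsilon}}\le \prod_{j<i}\Bigg(1+\sum_{t\ge1}\frac{2t+1}{(1-\delta_j)\,(p_j^{1-\epsilon})^{t}}\Bigg).
\]
The identity $\sum_{t\ge1}(2t+1)x^{-t}=(3x-1)/(x-1)^2$ holds for any real $x>1$, not just for primes. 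Applying it with $x=p_j^{1-\epsilon}$ gives exactly the factor in the statement. This requires $\epsilon<1$; for $\epsilon\ge1$ the right-hand side is infinite or undefined, so the inequality is vacuous there. Multiplying by the prefactor $1/(p_i-1)^2$, which is common to $T_{i,2}$ and the bound, completes the argument.

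The main obstacle is the claim $\lcm(m_1,m_2)>M$. The hypothesis of Theorem~\ref{main} controls the moduli $d\in D$, not the divisors $m_1,m_2$ of $Q_{i-1}$. So I would go back into the proof of Theorem~3.2 of \cite{Balister}, which is behind Lemma~\ref{mainmomentbounds}. There $M_i^{(2)}$ is expanded over pairs of moduli $d_1,d_2$ revealed at stage $i$, written $d_k=m_kp_i^{s_k}$ with $m_k\mid Q_{i-1}$. If $q\nmid\gcd(m_1,m_2)$ and $p_i\ne q$, then one of $d_1,d_2$ lies in $D_2$ and so exceeds $M$.

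I would carry the Rankin weight $(\lcm(d_1,d_2)/M)^\epsilon$ at the level of the $d$'s and then split off the $p_i$-part. The aim is to show that the $p_i$-power factor still sums to at most $1/(p_i-1)^2$. If the weight $p_i^{\epsilon s}$ inflates it, I would check whether Balister et al.'s bound on that factor has enough slack, or else restrict to pairs with $s_k=1$. The case $p_i=q$ needs separate treatment; I would first try to handle it by noting that the $m_k$ then come from moduli in $D_1$ whose $q$-part has been stripped, and see whether the bound above can still be recovered.
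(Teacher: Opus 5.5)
You have located the crux correctly, but the proposal does not close it, and it cannot: the claim you grant is false, and so is the inequality as stated. Everything after your ``granting that'' matches the paper's proof: Rankin's weight on $\lcm(m_1,m_2)$, dropping $q\nmid\gcd(m_1,m_2)$, multiplicativity of $\chi(m)\nu(m)/m^{1-\epsilon}$, and the identity at $x=p_j^{1-\epsilon}$. The paper justifies the crux in one line: $q$ fails to divide one of $m_1,m_2$, ``so that'' that one exceeds $M$. As you observed, the hypothesis of Theorem~\ref{main} concerns moduli in $D$, not divisors of $Q_{i-1}$, so this does not follow. It is in fact false, since $m_1=m_2=1$ lies in the range of summation of $T_{i,2}$ and contributes $(p_i-1)^{-2}$. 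Concretely, take $q=11$, $i=19$ and the paper's own choice $\epsilon=0.219$. The product $\prod_{j\le 18}\bigl(1+(3p_j^{0.781}-1)/(p_j^{0.781}-1)^2\bigr)$ is about $5\times10^{3}$, while $M^{0.219}\approx 8\times10^{5}$. So the right-hand side is about $0.006\,(p_{19}-1)^{-2}<T_{19,2}$. Your aside on $\epsilon\ge1$ is also off: for $\epsilon>1$ the closed form is finite, and it is negative at, e.g., $i=20$, $\epsilon=2$. So the statement is false there, not vacuous.

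Your repair, returning to the pairs $(d_1,d_2)$ with $d_k=m_kp_i^{s_k}$, is the right diagnosis of what the bound on $M_i^{(2)}$ really needs, but it cannot recover the stated inequality. For $p_i\ne q$ the hypothesis gives only $m_kp_i^{s_k}>M$ for the $d_k$ lying in $D_2$. So the Rankin weight must be $\bigl(\lcm(m_1,m_2)\,p_i^{\max(s_1,s_2)}/M\bigr)^{\epsilon}$. The $p_i$-factor then becomes $\sum_{s_1,s_2\ge1}p_i^{\epsilon\max(s_1,s_2)-s_1-s_2}\ge p_i^{\epsilon-2}$, which exceeds $(p_i-1)^{-2}$ by roughly $p_i^{\epsilon}$. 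There is no slack to absorb this: $(p_i-1)^{-2}=\sum_{s_1,s_2\ge1}p_i^{-s_1-s_2}$ is an exact evaluation. Nor can you discard $s_k\ge2$, since moduli divisible by $p_i^2$ may be present. At $i=\pi(q)$, relevant once $q\ge67$, every $d_k$ is a multiple of $q$ (including $d=q$ itself, with $m=1$). So there is no lower bound at all, and the unrestricted bound must be used there. Your route therefore proves only a bound weaker by a factor of about $p_i^{\epsilon}$. With the paper's choices of $\epsilon$, this factor is at least about $1.37$ for all $19\le i\le10^5$. That raises the $q=11$ entry $0.0646\ldots$ of \eqref{t2bound} by more than the slack $1-0.98942\ldots$ left in the final tally, so the numerics after this lemma would have to be redone.
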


\begin{proof}
Note that if $m_1,m_2|Q_{i-1}$ and $q \nmid \gcd(m_1,m_2)$, then it must be the case that $q$ does not divide at least one of $m_1$ or $m_2$, so that at least one of $m_1$ or $m_2$ is larger than $M$. This implies that $\lcm(m_1,m_2)>M$ for every such expression present in the definition of $T_{i,2}$. It follows that
\[
\sum_{\substack{m_1,m_2|Q_{i-1}\\q \nmid \gcd(m_1,m_2)}}\frac{\nu(\lcm(m_1,m_2))}{\lcm(m_1,m_2)}\leq \sum_{\substack{m_1,m_2|Q_{i-1}\\\lcm(m_1,m_2)>M}}\frac{\nu(\lcm(m_1,m_2))}{\lcm(m_1,m_2)}=\sum_{\substack{m|Q_{i-1}\\m >M}}\frac{\chi(m)\nu(m)}{m},
\]
where now
\[
\chi(m):=|\{(m_1,m_2) \in (\mathbb Z^{+})^{2}: \lcm(m_1,m_2)=m\}|
\]
is the multiplicative function used in \cite{Balister} that satisfies $\chi(p^t)=2t+1$ for all primes $p$ and positive integers $t$.
For any $\epsilon>0$, we deduce that
\[
\sum_{\substack{m \mid Q_{i-1}\\m >M}}\frac{\chi(m)\nu(m)}{m}\leq\sum_{m \mid Q_{i-1}}\frac{\chi(m)\nu(m)}{m}\bigg(\frac{m}{M}\bigg)^{\epsilon}=\frac{1}{M^\epsilon}\sum_{m \mid Q_{i-1}}\frac{\chi(m)\nu(m)}{m^{1-\epsilon}}. 
\]
Now, the function $\chi(m)\nu(m)/m^{1-\epsilon}$ is multiplicative. A similar computation as in the proof of the previous lemma gives us that
\begin{align*}
\frac{1}{M^\epsilon}\sum_{m \mid Q_{i-1}}\frac{\chi(m)\nu(m)}{m^{1-\epsilon}} &\leq \frac{1}{M^\epsilon}\prod_{j <i}\Bigg(1+\sum_{t \geq 1} \frac{2t+1}{(1-\delta_j)p_j^{t(1-\epsilon)}}\Bigg) \\[5pt]
&=\frac{1}{M^\epsilon}\prod_{j <i}\bigg(1+\frac{3p_j^{1-\epsilon}-1}{(1-\delta_j)(p_j^{1-\epsilon}-1)^2}\bigg),
\end{align*}
which implies what we wanted to show. 
\end{proof}
We apply Lemma~\ref{T2bound} with different choices for $\epsilon$ depending on $i$.  We chose
\begin{gather*}
\epsilon = 0.219 \quad \text{for $19 \le i \le 350$}, \qquad
\epsilon = 0.148 \quad \text{for $351 \le i \le 1000$}, \\[4pt]
\epsilon = 0.107 \quad \text{for $1001 \le i \le 2400$}, \qquad
\epsilon = 0.077 \quad \text{for $2401 \le i \le 6200$}, \\[4pt]
\epsilon = 0.051 \quad \text{for $6201 \le i \le 20000$}, \qquad
\epsilon = 0.026 \quad \text{for $20001 \le i \le 100000$}.
\end{gather*}
These choices for $\epsilon$ are based on achieving what we want with $q = 11$.  
We can however also use them for $q \ge 13$.  We justify that
\begin{equation}\label{t2bound}
\sum_{i=19}^{10^5}\frac{T_{i,2}}{4\delta_i(1-\delta_i)} < 
\begin{cases}
0.0646372491 &\text{if } q = 11, \\
0.0940600274 &\text{if } 13 \le q \le 61, \\
0.7084930261 &\text{if } q > 61.
\end{cases}
\end{equation}
The first computation with $q=11$ comes from directly computing and summing the bounds in Theorem~\ref{T2bound} for the $i$ in the range of the summation using the values of $\epsilon$ indicated above.  
The dependence on $q$ in the bound on $T_{i,2}$ in Lemma~\ref{T2bound} is solely in the value of $M = 10^{28}/q$.  
Having obtained a savings already in our previous estimates involving $q \ge 13$, we can be rather remiss in our estimates here.  As the largest value of $\epsilon$ chosen above is $0.219$ and the bound $0.0646372491$ on the sum above was obtained when $M = 10^{28}/11$, we deduce for general $q \ge 11$ that
\begin{align*}
\sum_{i=19}^{10^5}\frac{T_{i,2}}{4\delta_i(1-\delta_i)} 
&\le \dfrac{1}{M^{\epsilon}} \sum_{i=19}^{10^5}\frac{1}{4\delta_i(1-\delta_i)(p_i-1)^2} \prod_{j <i}\bigg(1+\frac{3p_j^{1-\epsilon}-1}{(1-\delta_j)(p_j^{1-\epsilon}-1)^2}\bigg) \\[5pt]
&< \dfrac{(10^{28}/11)^{\epsilon}}{M^{\epsilon}} (0.0646372491)
= \dfrac{(10^{28}/11)^{\epsilon}}{(10^{28}/q)^{\epsilon}} (0.0646372491) \\[5pt]
&= \bigg( \dfrac{q}{11} \bigg)^{\epsilon} (0.0646372491) \le \bigg( \dfrac{q}{11} \bigg)^{0.219} (0.0646372491).
\end{align*}
Recalling that $q \le 615997$, a direct computation replacing $q$ above with $q = 61$ and $q = 615997$ gives \eqref{t2bound}.

We turn now to the tail end estimates where $i > 10^{5}$.  This part will be independent of $q$.
We begin here with the first portion of the tail.  

\begin{lem}\label{firsttailbound}
With the notation above, we have
\[\sum_{i=10^5+1}^{2.5\cdot 10^8} \frac{M_i^{(2)}}{4\delta_i(1-\delta_i)}< 0.005247669.\]
\end{lem}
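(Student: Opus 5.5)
We bound each $M_i^{(2)}$ for $10^5 < i \le 2.5\cdot 10^8$ using the second inequality of Lemma~\ref{mainmomentbounds}, which does not depend on $q$:
\[
M_i^{(2)} \le \frac{1}{(p_i-1)^2}\prod_{j<i}\bigg(1+\frac{3p_j-1}{(1-\delta_j)(p_j-1)^2}\bigg).
\]
In this range $\delta_i = 1/2$, so $4\delta_i(1-\delta_i)=1$. The summand is therefore at most $P_{i}/(p_i-1)^2$, where $P_i$ denotes the product above.

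The plan is to split $P_i = P_{10^5+1}\cdot \prod_{10^5 < j < i}(1+\cdots)$. The first factor $P_{10^5+1}$ is a single explicit number, computed directly from the stated choices of $\delta_j$ for $j\le 10^5$. For the second factor, every $j>10^5$ has $1/(1-\delta_j)=2$. Using $\log(1+x)\le x$, we get
\[
\prod_{10^5<j<i}\bigg(1+\frac{2(3p_j-1)}{(p_j-1)^2}\bigg) \le \exp\bigg(\sum_{p_{10^5}<p<p_i}\frac{6p-2}{(p-1)^2}\bigg) \approx \bigg(\frac{\log p_i}{\log p_{10^5}}\bigg)^{6}
\]
by Mertens-type estimates. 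Since $p_i\le p_{2.5\cdot 10^8}$ is only around $5.3\cdot 10^9$, the cleanest rigorous route is entirely computational. We run through all primes up to $p_{2.5\cdot10^8}$, maintain the running product $P_i$ in floating point with a safe upward-rounding margin, and accumulate $\sum P_i/(p_i-1)^2$. This is a feasible sieve computation, roughly $5\cdot10^9$ in size, and it yields the stated bound $0.005247669$.

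If a less computational argument is preferred, the alternative is as follows. Bound $P_i \le P_{10^5+1}(\log p_i/\log p_{10^5})^{6}(1+o(1))$ using explicit Rosser--Schoenfeld/Dusart bounds for $\sum_{p\le x}1/p$, and for the error term $\sum (6p-2)/(p-1)^2 - 6/p$. Then bound the sum $\sum_{p>p_{10^5}} (\log p)^6/p^2$ by partial summation against explicit bounds on $\pi(x)$, giving roughly $(\log X)^{5}/X$ with $X=p_{10^5}\approx 1.3\cdot10^6$. We would then check that this times $P_{10^5+1}/(\log X)^6$ falls below the claimed constant.

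The main obstacle is the size of $P_{10^5+1}$. With $\delta_j$ near $1/2$, this product grows like a sizable power of $\log p$, and the tail sum $\sum 1/p^2$ alone gives only about $1/(X\log X)$. The bound therefore depends delicately on the exact value of $P_{10^5+1}$. I would first compute $P_{10^5+1}$ precisely and then decide whether the analytic tail estimate is sharp enough. If it is not, I would fall back on the direct summation, which I expect is how the constant $0.005247669$ was actually obtained. The upper limit $2.5\cdot10^8$ suggests that a separate analytic argument handles $i>2.5\cdot10^8$.
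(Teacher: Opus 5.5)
This is correct and is essentially the paper's proof. Your primary route is the same direct computation: bound each summand by $P_i/(p_i-1)^2$ via Lemma~\ref{mainmomentbounds} with $4\delta_i(1-\delta_i)=1$, compute the product over $j\le 10^5$ once, then update the running product one prime at a time while accumulating the sum. The analytic alternative you sketch is not needed here, since the paper uses that kind of Rosser--Schoenfeld argument only for $i>2.5\cdot 10^8$ in Lemma~\ref{tailbound}, as you guessed.
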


\begin{proof}
Observe that $\delta_i=1/2$ for $i > 10^5$, implying $4\delta_i(1-\delta_i)=1$ for each such expression in the sum.
We make use of the bound
\begin{equation}\label{lem2bd}
M_i^{(2)} \leq \frac{1}{(p_i-1)^2}\prod_{j <i}\bigg(1+\frac{3p_j-1}{(1-\delta_j)(p_j-1)^2}\bigg)
\end{equation}
from Lemma~\ref{mainmomentbounds}.
We compute the sum in the lemma by first computing the product above for $j \le 10^{5}-1$.  Then with each value of $i$ in the sum in order, we need only take the product already computed times the factor 
\[
1+\frac{3p_{i-1}-1}{(1-\delta_{i-1})(p_{i-1}-1)^2},
\]
to get the desired product for $i$.
Summing this product divided by $(p_{i}-1)^{2}$ over $i$ gives the desired result.
\end{proof}

We now turn to the second part of the tail end. 

\begin{lem}\label{tailbound}
With the notation above, we have
\[
\sum_{i>2.5\cdot 10^8} \dfrac{M_i^{(2)}}{4\delta_i(1-\delta_i)}
< 0.000016385.
\]
\end{lem}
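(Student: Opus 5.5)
We bound the tail sum analytically, since direct computation is no longer feasible for $i>2.5\cdot 10^8$. For every such $i$ we have $\delta_i=1/2$, so $4\delta_i(1-\delta_i)=1$. We therefore need to bound $\sum_{i>N} M_i^{(2)}$ with $N=2.5\cdot10^8$, using \eqref{lem2bd} from Lemma~\ref{mainmomentbounds}.

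The first step is to split the product in \eqref{lem2bd} at $j=N$:
\[
\prod_{j<i}\bigg(1+\frac{3p_j-1}{(1-\delta_j)(p_j-1)^2}\bigg)=P_N\prod_{N\le j<i}\bigg(1+\frac{2(3p_j-1)}{(p_j-1)^2}\bigg),
\]
where $P_N$ is the product over $j<N$. The quantity $P_N$ was already computed numerically in the proof of Lemma~\ref{firsttailbound}; it is the running product at the final step of that computation. For the remaining factors we use $1+x\le e^x$ and $\frac{2(3p-1)}{(p-1)^2}\le \frac{6}{p}\big(1+\frac{c}{p}\big)$ for a small explicit constant $c$. This gives
\[
\prod_{N\le j<i}\bigg(1+\frac{2(3p_j-1)}{(p_j-1)^2}\bigg)\le \exp\Big(6\sum_{p_N\le p<p_i}\frac1p+O\Big(\frac{1}{p_N}\Big)\Big).
\]

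The second step is to bound this sum of reciprocals of primes with an explicit Mertens-type estimate, for instance the Rosser--Schoenfeld or Dusart bounds $\sum_{p\le x}1/p=\log\log x+B+E(x)$ with $|E(x)|\le 1/(10\log^2 x)$ (or $1/(2\log^2x)$). This yields
\[
\prod_{N\le j<i}(\cdots)\le C_0\Big(\frac{\log p_i}{\log p_N}\Big)^{6},
\]
where $C_0$ is an explicit constant extremely close to $1$. Hence
\[
M_i^{(2)}\le \frac{C_0P_N}{(\log p_N)^6}\cdot\frac{(\log p_i)^6}{(p_i-1)^2}.
\]

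The third step is to sum over primes $p_i>p_N$. Since $p_N\approx 5.4\cdot 10^9$, we can replace $(p_i-1)^{-2}$ by $(1+\eta)p_i^{-2}$ with a negligible $\eta$. We then bound $\sum_{p>x}(\log p)^6/p^2$ by partial summation against an explicit upper bound for $\pi(t)$ or $\theta(t)$, for example $\theta(t)<1.0001t$ or $\pi(t)<\frac{t}{\log t}\big(1+\frac{1.2762}{\log t}\big)$. This gives roughly
\[
\sum_{p>x}\frac{(\log p)^6}{p^2}\lesssim \int_x^\infty\frac{(\log t)^5}{t^2}\,dt\approx\frac{(\log x)^5}{x}.
\]
Multiplying by $C_0P_N/(\log p_N)^6$, we need the result to fall below $0.000016385$. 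A heuristic check: $P_N/(p_N\log p_N)$ should be comparable to the size of the last terms of Lemma~\ref{firsttailbound}'s sum multiplied by $p_N/\log p_N$, which is plausible.

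The main obstacle is keeping every explicit constant tight enough: the error term in Mertens' estimate, raised to the sixth power through the exponential, and the slack in the Chebyshev bound. The target $0.000016385$ is stated to many digits, which suggests the authors used sharp bounds. If the first attempt is too lossy, I would first try using Dusart's sharper $\theta$ bounds and evaluating the tail integral $\int_x^\infty(\log t)^5t^{-2}\,dt$ exactly rather than by its leading term.
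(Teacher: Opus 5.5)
Your proposal is correct, and your first two steps are essentially the paper's. The paper also splits the product at $j=2.5\cdot10^8$, bounds each later factor by $1+6.000000002/p_j$, and uses the Rosser--Schoenfeld bound $\sum_{p\le x}1/p<\log\log x+B+1/(2\log^2x)$. One small difference: instead of a two-sided Mertens estimate, the paper subtracts the directly computed $C_1=\sum_{j\le 2.5\cdot10^8}6.000000002/p_j$. It therefore pays the error term only once, a factor $e^{3/\log^2p_N}\approx1.006$ instead of your $\approx1.012$, and obtains $M_i^{(2)}<0.008358575\,(\log p_i)^{6.000000002}/(p_i-1)^2$.

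The real difference is your third step. The paper avoids partial summation over primes altogether:
\begin{itemize}
\item it uses that $(\log x)^{6.000000002}/(x-1)^{1/3}$ is decreasing to freeze that factor at $p_N$;
\item it then uses $p_i-1>i\log i$ to reduce to $(\log N)^{-5/3}\sum_{i>N}i^{-5/3}$;
\item finally it compares that sum with an integral.
\end{itemize}
This needs nothing beyond a lower bound for $p_n$, but it overestimates $\sum_{p>p_N}(\log p)^6/p^2$ by a factor of about $1.46$. The many-digit constant $0.000016385$ is just the output of this crude chain, not a sign of sharp inputs. Your partial-summation route, done carefully, gives roughly $1.1\cdot10^{-5}$. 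So it proves the lemma with room to spare, at the price of needing explicit Chebyshev-type bounds.

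Three cautions for executing your step 3:
\begin{itemize}
\item \textbf{Keep the boundary term.} In partial summation it is as large as the main term. Take $\sum_{p>x}h(p)=-\pi(x)h(x)+\int_x^\infty\pi(t)\,(-h'(t))\,dt$ with $h(t)=(\log t)^6/t^2$ and $x=p_N$. If you drop $-\pi(x)h(x)$ because you only have an upper bound for $\pi$, the sum comes out near $2.4\cdot10^{-3}$ instead of about $1.35\cdot10^{-3}$. The final bound is then about $2\cdot10^{-5}$, and the argument fails. Use $\pi(p_N)=2.5\cdot10^8$ exactly, or a lower bound for $\theta(p_N)$ in the $\theta$ version.
\item \textbf{Evaluate the integral exactly.} Use $x^{-1}\sum_{k=0}^{5}\frac{5!}{(5-k)!}(\log x)^{5-k}$ for $\int_x^\infty(\log t)^5t^{-2}\,dt$. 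This is about $1.27$ times the leading term $(\log x)^5/x$.
\item \textbf{The ``heuristic check'' verifies nothing.} The numerics require the computed value of the product over $j<N$, which is about $1.049\cdot10^6$.
\end{itemize}
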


\begin{proof}
As in the previous proof, since $\delta_i=1/2$ for $i > 10^5$, we see that $4\delta_i(1-\delta_i)=1$ for each such expression in the sum. Our starting point is again the bound \eqref{lem2bd}
from Lemma~\ref{mainmomentbounds}.
We write the product there as
\begin{equation}\label{productsplit}
\prod_{j <i}\bigg(1+\frac{3p_j-1}{(1-\delta_j)(p_j-1)^2}\bigg)
= C_{0}\,
\prod_{2.5\cdot 10^8 <j<i}\bigg(1+\frac{3p_j-1}{(1-\delta_j)(p_j-1)^2}\bigg),
\end{equation}
where
\[C_0=\prod_{j \leq 2.5\cdot 10^8}\bigg(1+\frac{3p_j-1}{(1-\delta_j)(p_j-1)^2}\bigg).\]
A direct computation gives that $C_0 < 1048993.2048528$. 

We now look at the product on the right-hand side of \eqref{productsplit}. 
The function $x(3x-1)/(x-1)^2$ is decreasing for $x > 1$.  
For $j>2.5\cdot 10^8$, making use of $p_{2.5\cdot 10^8} = 5336500537$,
we obtain that 
\[\frac{p_j(3p_j-1)}{(1-1/2)(p_j-1)^2}<\frac{p_{2.5\cdot 10^8}(3p_{2.5\cdot 10^8}-1)}{(1-1/2)(p_{2.5\cdot 10^8}-1)^2} < 6.000000002,\]
and so 
\[\frac{3p_j-1}{(1-1/2)(p_j-1)^2}<\frac{6.000000002}{p_j}.\]
This implies that 
\[\prod_{2.5\cdot 10^8 <j<i}\bigg(1+\frac{3p_j-1}{(1-\delta_j)(p_j-1)^2}\bigg)<\prod_{2.5\cdot 10^8 <j<i}\bigg(1+\frac{6.000000002}{p_j}\bigg), \]
which we now proceed to estimate. We start by noting that 
\begin{align*}
\log\bigg(\prod_{2.5\cdot 10^8 <j<i}\bigg(1+\frac{6.000000002}{p_j}\bigg)\bigg)
&=\sum_{2.5\cdot 10^8 <j<i}\log\bigg(1+\frac{6.000000002}{p_j}\bigg) \\[5pt]
&< \sum_{2.5\cdot 10^8<j<i}\frac{6.000000002}{p_j}.
\end{align*}
To compute the rightmost sum, we use that
\[
\sum_{2.5\cdot 10^8<j<i}\frac{6.000000002}{p_j}=\sum_{j<i}\frac{6.000000002}{p_j}
- C_{1},
\]
where
\[
C_{1} = \sum_{j \leq 2.5\cdot 10^8}\frac{6.000000002}{p_j} > 20.22277221,
\]
done by a direct computation.

Recall that we are interested in $i > 2.5\cdot 10^8$ and $p_{2.5\cdot 10^8} = 5336500537$.  
From \cite[Theorem~5]{Rosser}, with $B = 0.26149721284764\ldots$, we deduce that
\begin{align*}
\sum_{j<i}\frac{6.000000002}{p_j} 
&< 6.000000002\bigg(\log\log p_{i-1} +B+\frac{1}{2\log^2(p_{i-1})}\bigg) \\[5pt]
&< 6.000000002\bigg(\log\log p_{i} +B+\frac{1}{2\log^2(p_{2.5\cdot 10^8})}\bigg) \\[5pt]
&< 6.000000002\,\log\log p_{i} + 1.574963388.
\end{align*}
Hence, we see that
\begin{align*}
\prod_{j <i}\bigg(1+\frac{3p_j-1}{(1-\delta_j)(p_j-1)^2}\bigg)
&< C_0\,(\log p_{i})^{6.000000002}\exp\big(1.574963388-C_1\big) \\[5pt]
&< 0.008358575 \,(\log p_{i})^{6.000000002}.
\end{align*}
Let $C_{2} = 0.008358575$. 
Putting everything together, we find that
\[
\sum_{i>2.5\cdot 10^8} \dfrac{M_i^{(2)}}{4\delta_i(1-\delta_i)} 
= \sum_{i>2.5\cdot 10^8}M_i^{(2)} 
< C_2 \sum_{i >2.5\cdot 10^8}\frac{(\log p_{i})^{6.000000002}}{(p_i-1)^{2}}.
\]
One can check that 
$(\log x)^{6.000000002}/(x-1)^{1/3}$
is a decreasing function for $x \ge p_{2.5\cdot 10^8}$.
Hence, we obtain 
\[
\sum_{i>2.5\cdot 10^8} \dfrac{M_i^{(2)}}{4\delta_i(1-\delta_i)}  
< \dfrac{C_{2}\,(\log p_{2.5\cdot 10^8})^{6.000000002}}{(p_{2.5\cdot 10^8}-1)^{1/3}}
\sum_{i >2.5\cdot 10^8}\frac{1}{(p_{i}-1)^{5/3}}.
\]
From \cite[Theorem~3]{Rosser}, we see that $p_i-1>i\log i$ for $i>2.5\cdot 10^8$. Thus, we deduce that
\begin{align*}
\sum_{i>2.5\cdot 10^8} \dfrac{M_i^{(2)}}{4\delta_i(1-\delta_i)}  
&< \dfrac{C_{2}\,(\log p_{2.5\cdot 10^8})^{6.000000002}}{(p_{2.5\cdot 10^8}-1)^{1/3}}
\sum_{i >2.5\cdot 10^8}\frac{1}{(i\log i)^{5/3}} \\[5pt]
&< \dfrac{C_{2}\,(\log p_{2.5\cdot 10^8})^{6.000000002}}{(p_{2.5\cdot 10^8}-1)^{1/3}(\log(2.5\cdot 10^8))^{5/3}}
\sum_{i >2.5\cdot 10^8}\frac{1}{i^{5/3}}.
\end{align*}
Using an integral to bound the sum, we find that
\[
\sum_{i >2.5\cdot 10^8}\frac{1}{i^{5/3}}<\int_{2.5\cdot 10^8}^\infty \frac{1}{x^{5/3}}\,dx=\frac{3}{2\,(2.5\cdot 10^8)^{2/3}}.
\]
We deduce that
\[
\sum_{i>2.5\cdot 10^8}\frac{M_i^{(2)}}{4\delta_i(1-\delta_i)}
< \dfrac{3 C_{2}\,(\log p_{2.5\cdot 10^8})^{6.000000002}}{2 (p_{2.5\cdot 10^8}-1)^{1/3}(\log(2.5\cdot 10^8))^{5/3} (2.5\cdot 10^8)^{2/3}}
< 0.000016385,
\]
which is what we wanted to show.  
\end{proof}

Now that we have finished the tail end estimates, we combine our second moment bounds.  
From \eqref{t1bound}, \eqref{t2bound}, Lemma~\ref{firsttailbound} and Lemma~\ref{tailbound}, we obtain
\begin{equation}\label{secondmomentbound}
\sum_{i \geq 19} \frac{M_i^{(2)}}{4\delta_i(1-\delta_i)} < 
\begin{cases}
0.298556728 &\text{if } q = 11 \\
0.292801749 &\text{if } 13 \le q \le 61 \\
0.907234748 &\text{if } q > 61.
\end{cases}
\end{equation}

\section{Proof of the Main Result and Discussion}
\begin{proof}[Proof of Theorem \ref{main}]
From \eqref{firstmomentbound} and \eqref{secondmomentbound}, we obtain for $J = 18$ that
\[
\sum_{\substack{1 \le i \le J\\ \delta_{i} = 0}} M_i^{(1)} + \sum_{\substack{1 \le i \le J\\ \delta_{i} \ne 0}} \min\bigg(M_i^{(1)},\frac{M_i^{(2)}}{4\delta_i(1-\delta_i)}\bigg) <
\begin{cases}
0.989421692 &\text{if } q = 11 \\
0.877379796 &\text{if } 13 \le q \le 61 \\
0.907234749 &\text{if } q > 61.
\end{cases}
\]
Since these bounds are all less than $1$, we deduce Theorem~\ref{main} from Lemma~\ref{maindistortion}. 
\end{proof}

Some natural questions arise from our results.  The following is motivated by Theorem~\ref{no11}.
 
\begin{question}
What is the largest integer $n$ for which it is possible to construct a covering system using only distinct multiples of $n$ as moduli?
\end{question}

Theorem~\ref{no11} shows that such an $n$ cannot be a prime $\ge 11$.  On the other hand, the work of Hough \cite{hough} and of Balister, Bollob\'as, Morris, Sahasrabudhe, and Tiba \cite{Balister} imply that such an $n$ is bounded.   A construction similar to that used in the proof for Corollary~\ref{cor} together with the work by Harrington in \cite{Harrington} implies that for $n = 4$, it is possible to construct a covering system using only distinct multiples of $n$ as moduli.  

Regarding Corollary~\ref{cor}, recall that we defined $N$ to be the largest integer for which it is possible to construct a family of $N$ distinct covering systems such that no two covering systems share a common modulus. We established that $N<10$, but the exact value of $N$ is still not clear. 

\begin{question}
What is the maximal $N$ such that a family of $N$ distinct covering systems exist where the sets of moduli in the covering systems are pairwise disjoint?
\end{question}

The above question is motivated by Harrington \cite{Harrington}.  
Short of finding the exact value of $N$, any improvement upon either the current best known lower bound $N \geq 3$, or the current best known upper bound $N \le 9$, would be of interest.  We note that the related problem of finding the maximal $N$ such that there is a system of congruences with distinct moduli $> 1$ for which every integer satisfies at least $N$ congruences in the system was posed in \cite{Harrington}.  It would be interesting to know whether these two values of $N$, the one in the question above and the one from \cite{Harrington}, are necessarily the same.

\subsection*{Funding} The third author is supported by a scholarship from the Natural Sciences and Engineering Research Council of Canada (NSERC). 

\bibliographystyle{plain}

\begin{thebibliography}{99}

\bibitem{BalisterExposition}
P.~Balister, B.~Bollob\'as, R.~Morris, J.~Sahasrabudhe, and M.~Tiba, \textit{Erd\H{o}s covering systems}, Acta Math.~Hungar.~161 (2020), 540--549.

\bibitem{BalisterSquareFree}
P.~Balister, B.~Bollob\'as, R.~Morris, J.~Sahasrabudhe, and M.~Tiba, \textit{The Erd\H{o}s-Selfridge problem with square-free moduli}, Algebra and Number Theory 15 (2021), 609-–626.

\bibitem{Balister}
P.~Balister, B.~Bollob\'as, R.~Morris, J.~Sahasrabudhe, and M.~Tiba, \textit{On the Erd\H{o}s covering
problem: the density of the uncovered set}, Invent.~Math.~228 (2022), 377--414.

\bibitem{Cummings}
M.~Cumming, M.~Filaseta, and O.~Trifonov, \textit{An upper bound for the minimum modulus in a covering system with squarefree moduli}, Acta Math.~Hungar.~175 (2025), 1--25.

\bibitem{Erdos}
P.~Erd\H{o}s, \textit{On integers of the form $2^k+p$ and some related problems}, Summa Brasil.~Math.~2 (1950), 113--123.

\bibitem{Mirsky}
P.~Erdős,
 \textit{On a problem concerning congruence systems} (Hungarian),
Mat.~Lapok~3 (1952), 122--128.

\bibitem{Erdos2}
P.~Erd\H{o}s, \textit{Problems and results in combinatorial number theory, A survey of combinatorial theory}, J.~N.~Srivastava, ed., North-Holland, Amsterdam, 1973, 117--138.

\bibitem{Filaseta}
M.~Filaseta and A.~Kalogirou,
\textit{Covering systems with the sum of the reciprocals of the moduli close to $1$}, Trans.~Amer.~Math.~Soc.~379 (2026), 4945--4972.

\bibitem{Harrington}
J.~Harrington, \textit{Two questions concerning covering systems}, International Journal of Number Theory 11 (2015), 1739--1750.

\bibitem{hough}
R.~D.~Hough, \textit{Solution of the minimum modulus problem for covering systems}, Ann. of Math.  181 (2015), 361--382.

\bibitem{Klein}
J.~Klein, D.~Koukoulopoulos, and S.~Lemieux, \textit{On the $j$-th smallest modulus of a covering system with distinct moduli}, International Journal of Number Theory 20 (2024), 471--479.

\bibitem{Li}
H.~Li, B.~Wang, C.~Wang, and S.~Yi, \textit{On covering systems of polynomial rings over finite fields}, Proc.~Amer.~Math.~Soc.~152 (2024), 3731--3742.

\bibitem{Li2}
 H.~Li, B.~Wang, C.~Wang, and S.~Yi, \textit{On Erd\H{o}s covering systems in global function fields}, J.~Number Theory 266 (2025), 269--280.

 \bibitem{Li3}
 H.~Li, B.~Wang, and S.~Yi, \textit{On the minimum modulus problem in number fields}, arXiv preprint, arXiv:2302.05946, 2026.

\bibitem{Rosser}
J.~B.~Rosser and L.~Schoenfeld, \textit{Approximate formulas for some functions of prime numbers},
Illinois J.~Math.~6 (1962), 64--94.

\bibitem{Wang}
B.~Wang, \textit{Bounds for Erd\H{o}s covering systems in global function fields}, International Journal of Number Theory 22 (2026), 1327--1336. 

\end{thebibliography}

\end{document}